\tikzset{cross/.style={cross out, draw=black, minimum size=2.5*(#1-\pgflinewidth), inner sep=2pt, outer sep=0.5pt},
	cross/.default={1pt}}
\setlist[enumerate]{itemsep=.3mm}
\setlist[itemize]{itemsep=.3mm}
\newcommand{\E}{\mathbb E}
\renewcommand{\P}{\mathbb P}
\newcommand{\Bin}{\textbf{Bin}}
\newcommand{\ep}{\varepsilon}
\renewcommand{\deg}{\text{deg}}
\newcommand{\one}{\textnormal{\textbf{1}}}
\newcommand{\zero}{\textnormal{\textbf{0}}}
\newtheorem{theorem}{Theorem}[section]
\newtheorem{lemma}[theorem]{Lemma}
\newtheorem{proposition}[theorem]{Proposition}
\newtheorem{thm}{Theorem}[section]
\newtheorem{cor}[thm]{Corollary}
\newtheorem{prop}[thm]{Proposition}
\theoremstyle{definition}
\newtheorem{definition}[theorem]{Definition}
\newtheorem{remark}[theorem]{Remark}
\newtheorem*{remark-non}{Remark}
\title{\textbf{A discontinuous phase transition in the threshold-$\theta \geq 2$ contact process on random graphs}}
\author{Danny Nam\footnote{Department of mathematics, Princeton University; dhnam@princeton.edu.} \footnote{This work is a result of 2019 AMS MRC: Stochastic spatial models.} }
\date{\vspace{-5ex}}
\begin{document}
	\maketitle
	\begin{abstract}
		We study the discrete-time threshold-$\theta \geq 2$ contact process on random graphs of general degrees. For random graphs with a given degree distribution $\mu$, we show that if $\mu$ is lower bounded by $\theta+2$ and has finite $k$th moments for all $k>0$, then the discrete-time threshold-$\theta$ contact process on the random graph exhibits a discontinuous phase transition in the emergence of metastability, thus answering a question of Chatterjee and Durrett \cite{cd13}. To be specific, we establish that (i) for  any large enough infection probability $p>p_1$, the process started from the all-infected state \textsf{whp} survives for  $e^{\Theta(n)}$-time, maintaining a large density  of infection; (ii) for any $p<1$, if the initial density is smaller than $\varepsilon(p)>0$, then it dies out in $O(\log n)$-time \textsf{whp}. We also explain some extensions  to more general random graphs, including the Erd\H{o}s-R\'enyi graphs. Moreover, we prove that the threshold-$\theta$ contact process on a random $(\theta+1)$-regular graph dies out in time $n^{O(1)}$ \textsf{whp}. 
	\end{abstract}

\section{Introduction}

In this paper, we study the discrete-time threshold-$\theta$ contact process with $\theta\geq 2$. The model is defined on a graph $G=(V,E)$, and the configuration of the process at time $t \in \mathbb{N}$ is $X_t \in \{0,1 \}^V$, where 0 and 1 denote `healthy' and `infected' individuals, respectively. Suppose that the state $X_t$ at time $t$ is given. Then its transitions are defined as follows.

\begin{itemize}
	\item [$\bullet$] If a vertex $v$ has at least $\theta$ neighbors $u$ with $X_t(u)=1$, then $X_{t+1}(v)=1$ with probability $p$ and $X_{t+1}(v)=0$ with probability $1-p$. 
	
	\item [$\bullet$] If $v$ has less than $\theta$ infected neighbors at time $t$, then $X_{t+1}(v)=0$ with probability $1$.
	
		\item [$\bullet$] At each time step, transitions happen independently and simultaneously at all sites.
\end{itemize}

Regarding $0$ and $1$ as `vacant' and `occupied by a particle,' the process can be thought of a population growth model with sexual reproduction \cite{t74,bg911,c92,c94,dg85,ms09}. It can also be seen as a dynamical version of the bootstrap percolation process (see, e.g., \cite{al03} for a review on this topic).

One interesting aspect of the threshold contact process is that on various graphs including lattices, (homogeneous or Galton-Watson) trees and random graphs, the process is predicted to display a discontinuous phase transition. To be specific, on infinite graphs, we expect the density  of infection in the upper invariant measure (i.e., the law of the system at $t\rightarrow \infty$ limit started from all-infected initial state) to be discontinuous at $p_c$, the point where  a nonzero invariant measure emerges. On (finite but large) random graphs, we believe that there are  metastable states (i.e., configurations that survive for exponentially long time) at $p>p_c$ and  their densities stay strictly away from $0$ as $p \searrow p_c$. However, not much is known in a rigorous sense. 

Chatterjee and Durrett \cite{cd13} studied this model on random $d$-regular graphs and infinite $d$-regular trees, and showed that for any $d\geq \theta +2$, there is such a discontinuous phase transition. Further, they asked if this  is true for random graphs with a general degree distribution $\mu$. Our main result answers this question for $\mu$ with a light enough tail. Denoting the random graph with degree distribution $\mu$ by $G_n \sim \mathcal{G}(n,\mu)$ (see Section \ref{subsec:prelim} for its full definition), the main theorem can be stated as follows.

\begin{thm}\label{thm:thm1}
	Let $\theta\geq 2$ be an integer and $\mu$ be a distribution on $\mathbb{N}$ such that $D\sim\mu$ satisfies $\P(D\geq \theta+2)=1$ and $ \E D^k <\infty$ for all $k>0$.  \textsf{Whp} over the choice of $G_n \sim \mathcal{G}(n,\mu)$, the discrete-time threshold-$\theta$ contact process $(X_t)$ on $G_n$ satisfies the following:
	
	\begin{enumerate}
		\item[{\textnormal 1.}] There exist $p_1 \in (0,1)$ and $\varepsilon_1,\gamma_1>0$ such that if $p\geq p_1$ and $|X_0|\geq (1-\varepsilon_1)n$, then $|X_t|\geq (1-\varepsilon_1)n$ for all $t\in[0, e^{\gamma_1 n}]$ \textsf{whp}.
		
		\item [{\textnormal 2.}] For any $p\in(0,1)$, there exist constants  $\varepsilon_2(p), C_2(p)>0$ such that if $|X_0|\leq \varepsilon_2 n$, then $X_{C_2 \log n} = \textbf{0}$ \textsf{whp}. 
	\end{enumerate}
\end{thm}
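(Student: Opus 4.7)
The proof of both parts hinges on a single combinatorial property of $G_n \sim \mathcal{G}(n,\mu)$, together with standard concentration on top. I would first establish the property: \emph{\textsf{whp} over $G_n$, for each integer $k \geq 2$ there exist constants $C_k = C_k(\mu)>0$ and $\varepsilon_0 > 0$ such that every $S \subseteq V$ with $|S| \leq \varepsilon_0 n$ satisfies}
\[
N_k(S) \;:=\; \bigl|\{v \in V : |N(v) \cap S| \geq k\}\bigr| \;\leq\; C_k\, \frac{|S|^k}{n^{k-1}}.
\]
The plan for this lemma is a first-moment plus union-bound argument: a handshake count in the configuration model gives $\E N_k(S) \lesssim n\,\E\binom{D}{k}(|S|/n)^k$ for fixed $S$, and the assumption that $\mu$ has all moments finite supplies tail bounds strong enough to dominate the $\binom{n}{s}$ choices of $S$ of each size $s$.

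Given the lemma, Part 2 will follow by iteration. A vertex can be infected at time $t+1$ only if it already lies in $N_\theta(X_t)$, so
\[
|X_{t+1}| \;\leq\; |N_\theta(X_t)| \;\leq\; C_\theta \, (|X_t|/n)^{\theta-1}\, |X_t|
\]
holds deterministically whenever $|X_t|\leq \varepsilon_0 n$. I would then take $\varepsilon_2 \leq \varepsilon_0$ small enough that $C_\theta\,\varepsilon_2^{\theta-1} \leq 1/2$: then $|X_t|$ at least halves each step (and in fact contracts double-exponentially, since $\theta \geq 2$), so after $C_2 \log n$ steps we reach $|X_{C_2\log n}| < 1$, i.e., $X_{C_2 \log n} = \zero$.

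For Part 1, set $Y_t := V \setminus X_t$; the key observation is that $v \in Y_{t+1}$ iff either (i) $|N(v) \cap Y_t| \geq d_v - \theta + 1 \geq 3$, or (ii) $v$'s independent $p$-coin at time $t+1$ is tails, and conditional on $Y_t$ the indicators $\one(v \in Y_{t+1})$ are \emph{independent} functions of the coins. Applying the lemma with $k = 3$ (valid because $d_v \geq \theta + 2$),
\[
\E\bigl[\,|Y_{t+1}|\,\big|\, Y_t\bigr] \;\leq\; (1-p)\,n \;+\; p\, C_3\, (|Y_t|/n)^3 \, n.
\]
I would set $\varepsilon_1 := 2(1-p_1)$ and choose $p_1$ close enough to $1$ that $p_1 C_3 \varepsilon_1^2 \leq 1/4$; then for every $Y_t$ with $|Y_t|\leq \varepsilon_1 n$ the conditional mean is at most $\varepsilon_1 n - c(1-p_1)\,n$ for some $c>0$. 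Hoeffding applied to the independent indicators then gives $\P(|Y_{t+1}|>\varepsilon_1 n\mid Y_t) \leq \exp(-\gamma n)$ for some $\gamma = \gamma(p_1,\mu)>0$, and a union bound over $t \in [0,e^{\gamma_1 n}]$ with any $\gamma_1 \in (0,\gamma)$ concludes.

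The main obstacle is the structural lemma: its per-set deviation probability must beat $\binom{n}{|S|}^{-1}$, which is where the full moment assumption $\E D^k < \infty$ becomes essential---to suppress the rare but non-negligible contribution of atypically high-degree vertices in the configuration model. A secondary point worth flagging for Part 1 is that the gap between $\E|Y_{t+1}|/n$ and the target $\varepsilon_1$ is only of order $1-p_1$, so the Hoeffding exponent scales like $(1-p_1)^2 n$; this is still exponential, but it is what forces $p_1$ to be chosen close to $1$ and makes $\gamma_1$ depend on $p_1$.
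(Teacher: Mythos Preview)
Your central structural lemma is false, and this undermines both parts. The bound $N_k(S)\le C_k|S|^k/n^{k-1}$ cannot hold for all $S$ with $|S|\le\varepsilon_0 n$: take $S=\bigcup_{i=1}^m N(v_i)$ to be a union of $m$ essentially disjoint vertex-neighborhoods (such sets exist in $G_n$ for any $m=o(n)$ since the graph is locally treelike). Then $|S|\asymp m\,\E D$, yet each $v_i$ has all of its $d_{v_i}\ge k$ neighbors inside $S$, so $N_k(S)\ge m\asymp|S|/\E D$. This lower bound is \emph{linear} in $|S|$, so your inequality fails whenever $|S|=o(n)$; no moment assumption on $\mu$ helps, because the obstruction comes from the structure of $S$, not from high-degree vertices. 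The first-moment-plus-union-bound plan likewise cannot produce a sub-linear bound: beating $\binom{n}{s}$ requires the deviation level $t$ to satisfy roughly $t\log(t/\E N_k(S))\gtrsim s\log(n/s)$, which forces $t$ of order $s$, not $s^k/n^{k-1}$. The correct uniform statement, which is what the paper establishes (Propositions~\ref{prop:2} and~\ref{prop:3gen}), is only of the form $|S^{*2}|\le(1+\delta)|S|$ and $|S^{*(d_v-\theta+1)}|\le 0.9|S|$.

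For Part~2 this is fatal to your strategy: your deterministic contraction never invokes $p<1$, yet for $p$ near $1$ the process genuinely does not shrink from some small configurations --- for $\theta=2$, any short cycle $C$ satisfies $N_2(C)\supseteq C$, so $|X_{t+1}|=|X_t|$ with probability $p^{|C|}$. The paper instead combines the linear structural bound with the coin flips to get $|X_{t+1}|\leq_{\textsc{st}}\Bin((1+\delta_2)|X_t|,p)$ with $\delta_2=\tfrac13(1-p)$, so that $p(1+\delta_2)<1$ yields \emph{stochastic} contraction; and because the error probability in the structural bound is not summable once $|X_t|$ falls below $\Theta(\log\log n)$, a separate ``extinguish by a lucky all-tails step'' argument is needed in that regime. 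Your Part~1 outline is closer to salvageable: once the cubic estimate is replaced by the paper's linear one $|\{v:|N(v)\cap Y_t|\ge d_v-\theta+1\}|\le 0.9|Y_t|$, the conditional mean of $|Y_{t+1}|$ becomes at most $(1-p)n+0.9p|Y_t|$, which still sits strictly below $\varepsilon_1 n$ for $p_1$ close enough to $1$, and your Hoeffding-plus-union-bound step goes through essentially as written.
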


\noindent In the statement, \textsf{whp} stands for \textsf{with high probability}, meaning that an event happens with probability tending to $1$ as $n\rightarrow \infty$. Also, note that there are two layers of randomness: we first generate a random graph $G_n$, and then on (fixed) $G_n$ we run the threshold-$\theta$ contact process which is a random process. In Section \ref{subsec:generalized results}, we discuss some generalizations of Theorem \ref{thm:thm1}.

Theorem \ref{thm:thm1} implies that the threshold-$\theta$ contact process on $G_n\sim \mathcal{G}(n,\mu)$ exhibits a discontinuous phase transition: along with the monotonicity of our model (in terms of $p$),  the first statement shows that there is the regime  $p>p_c$ where a metastable state  emerges, and the second tells us that its density should not be too close to $0$. The second part also implies that $p_c >0$, for instance by setting $p<\varepsilon_2(\frac{1}{2})$, we get $|X_1| \leq \varepsilon_2 n$ and then a logarithmic time survival.

Intuitively, we have a fairly clear reason for a discontinuous phase  transition to take place, which we briefly describe as follows.
\vspace{2mm}

\noindent $\blacktriangleright$ \textit{Mean-field calculation and  metastability.}  ~Consider the threshold-$\theta$ contact process on a $(\theta+2)$-regular tree, and suppose that we are interested in the flow of infection towards the root. Let $v$ be a vertex other than the root and assume that at time $t$ each child $u\sim v$ is infected with probability $q$ independently of each other. Then, by only looking at the infections among its children, the probability that $v$ is infected at time $t+1$ is at least
\begin{equation*}
(\theta+1) pq^{\theta} (1-q) + pq^{\theta+1} = pq^{\theta}((\theta+1)- \theta q) =: f(q),
\end{equation*}
and one can see that for $p$ close enough to $1$, there is a stable fixed point of $f(q)=q$. This implies that the threshold-$\theta$ contact process on infinite $(\theta+2)$-regular tree should survive for large enough $p$ and hence have a nontrivial stationary distribution. Since we assumed that $\mu$ is lower bounded by $\theta+2$, the local neighborhoods of $G_n \sim \mathcal{G}(n,\mu)$ roughly dominate $(\theta+2)$-regular trees, inferring the existence of metastability on $G_n$ for large $p$. 

\vspace{2mm}
\noindent $\blacktriangleright$ \textit{Extinction of a low-density state.} ~Suppose that the local neighborhood $N(v,R+1)$ of $v\in G_n$ is a tree for some constant $R$. In such a case, it is clear that if $X_0 = \one_{N(v,R)}$, then $X_t(u)=0$ for all $t$ and $u\notin N(v,R)$. This shows that the infections inside a `bounded treelike region' cannot percolate, and hence they may die out rapidly. The infections inside a multiple of such regions that are distant from each other  have the same qualitative behavior, inferring extinction started from a  state with very low density. To justify this intuition, one has to consider all possible configurations of small density, as stated in Theorem \ref{thm:thm1}.
 
 \vspace{2mm}
 Fontes and Schonmann \cite{fs08} studied the continuous-time threshold-$\theta\geq 2$ contact process on infinite $d$-regular trees. They showed that for large $d$, the process exhibits a discontinuous phase transition by a mean-field analysis. Later, Chatterjee and Durrett \cite{cd13} extended the result to all $d\geq \theta+2$ for the discrete-time model, as a result of establishing the same type of conclusion on random $d$-regular graphs. The main contribution of this work is to demonstrate an analogous phenomenon on general random graphs. For a more detailed review on related subjects, we refer the reader to \cite{cd13} which has a nice summary on the literature.

On the other hand,  \cite{cd13} conjectured that the threshold-$\theta$ contact process should die out on random $(\theta+1)$-regular graphs, implying that the condition $D\geq \theta+2$ in Theorem \ref{thm:thm1} is not just a technical device. We verify this conjecture by establishing a polynomial upper bound on the survival time.

\begin{thm}\label{thm:thm2}
	Let $p\in(0,1)$ be a given number and $\theta \geq 2$ be an integer. There exists a constant $C_p$ such that  \textsf{whp} over the choice of $G_n \sim \mathcal{G}(n,\theta+1)$,  the threshold-$\theta$ contact process on $G_n$ with all-infected initial state dies out in time $n^{C_p}$ \textsf{whp}.
\end{thm}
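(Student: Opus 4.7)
The plan is to exploit a simple monotone structure specific to the $(\theta+1)$-regular setting. The crucial observation is that if a vertex $v$ has at least $2$ healthy neighbors at time $t$, then $v$ has at most $\theta-1$ infected neighbors, so $v$ is deterministically healthy at time $t+1$. Let $K_t$ denote the $2$-core of the subgraph of $G_n$ induced by the healthy set $S_t := V \setminus X_t$. Then $K_t$ is monotone nondecreasing in $t$: every $v \in K_t$ has $\geq 2$ neighbors in $K_t$, hence is healthy at $t+1$; those neighbors also remain healthy by the same argument; so $K_t$ continues to be a set of minimum induced degree $\geq 2$ inside $S_{t+1}$, giving $K_t \subseteq K_{t+1}$.

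My goal will be to show that by some time $T = n^{C_p}$, $K_T = V$ \textsf{whp}, which immediately forces $X_T = \zero$. I plan to achieve this by seeding all short cycles: it is a standard random-regular-graph input that there is a constant $\gamma = \gamma(\theta)$ such that \textsf{whp} every vertex of $G_n \sim \mathcal{G}(n,\theta+1)$ lies in some cycle of length at most $L := \gamma \log n$, while the total number $N$ of such cycles is at most $n^{O(1)}$. Once every cycle of length $\leq L$ has been entirely healthy at some step in $[1,T]$, monotonicity of $K_t$ places all these cycles in $K_T$; since they cover $V$, this gives $K_T = V$.

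For the seeding, I will fix any cycle $C$ of length $\ell \leq L$ and observe that, conditional on the history $\mathcal{F}_{t-1}$, the updates at time $t$ are independent across vertices and each vertex becomes healthy with probability at least $1-p$ (deterministically if it has $<\theta$ infected neighbors, else with probability exactly $1-p$). Hence $\P(C \subseteq S_t \mid \mathcal{F}_{t-1}) \geq (1-p)^\ell \geq (1-p)^L$, and iterating through time yields
\begin{equation*}
	\P\bigl(C \not\subseteq S_t \text{ for every } t \in [1,T]\bigr) \leq \bigl(1-(1-p)^L\bigr)^T \leq \exp\bigl(-T(1-p)^L\bigr).
\end{equation*}
A union bound over the $\leq N$ short cycles then gives that the probability some cycle is never fully healthy is at most $N\exp(-T(1-p)^L)$, which is $o(1)$ for any $T \geq n^{\gamma\log(\theta/(1-p))+O(1)}$; this is polynomial in $n$, yielding the desired exponent $C_p$.

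\textbf{The main obstacle} is verifying the two random-graph inputs: \textsf{whp} every vertex of $G_n$ lies in some cycle of length $O(\log n)$, and the total number of such short cycles is polynomial in $n$. The first will follow from expansion (a ball of radius $\sim \log_\theta n$ around $v$ already contains more than half the vertices by the configuration-model growth rate, so two vertex-disjoint short paths from $v$ must meet and close up into a short cycle through $v$), and the second from the standard first-moment estimate $\E[\#\{k\text{-cycles}\}] \sim \theta^k/(2k)$ for $\mathcal{G}(n,\theta+1)$. Both are classical but require some bookkeeping for a self-contained treatment; the remaining ingredients—the monotonicity of $K_t$ and the per-cycle seeding calculation—are short and essentially algebraic.
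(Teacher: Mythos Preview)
Your proposal is correct and follows essentially the same strategy as the paper: healthy cycles remain healthy forever (your $2$-core monotonicity of $K_t$ is exactly the paper's observation that $X_t(C)\equiv\zero$ implies $X_s(C)\equiv\zero$ for $s\ge t$), every vertex of $\mathcal{G}(n,\theta+1)$ lies on a cycle of length $O(\log n)$, and each such cycle is all-healthy at any given step with probability at least $(1-p)^{O(\log n)}$. One simplification you can adopt from the paper: rather than bounding the total number of short cycles, just fix one short cycle $C_v$ per vertex $v$ (at most $n$ cycles in all) and union-bound over these---this removes the need for the first-moment cycle count entirely.
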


In order to  prove Theorem \ref{thm:thm2}, consider a cycle $C$ inside $G_n\sim \mathcal{G}(n,\theta+1)$. Note that
\begin{center}
	if $X_t(C) = \zero$, then $X_s(C) = \zero$ for all $s\geq t$,
\end{center}
due to the definition of the threshold-$\theta$ contact process. Further, at each time step, the probability of the entire cycle $C$ becoming healthy is at least $(1-p)^{|C|}$. Therefore, we show that the graph $G_n$ can be covered with cycles of length $O(\log n)$ and then convert each of them to be all-healthy in time $(1-p)^{-O(\log n)}$, thus obtaining that the process can survive at most polynomially long time. Details are discussed in Section \ref{sec:thm2}.


\subsection{Results for more general random graphs}\label{subsec:generalized results}

It turns out that the condition $D\geq \theta+2$ can easily be removed if we know the existence of the `($\theta+2$)-core' inside $G_n$, the largest induced subgraph with minimal degree $\theta+2$. Kim \cite{k06} introduced a robust technique  that finds cores inside Erd\H{o}s-R\'enyi random graphs, which was later generalized to the case of $G_n \sim \mathcal{G}(n,\mu)$ in \cite{jl07}. 

Let $D\sim \mu$, $h\in (0,1)$ and $D_h = \Bin(D, h)$, where $\Bin$ is a shorthand for a binomial random variable. That is, for each $r\in \mathbb{N}$,
\begin{equation*}
\P(D_h = l) = \sum_{j\geq l} \P(\Bin(j, h)= l) \,\P(D=j).
\end{equation*}

\begin{prop}[\cite{k06,jl07}]\label{prop:kcore}
	Let $r$ be a positive integer and $\mu$ be a degree distribution with $d=\E_{D\sim \mu}D$. Define the functions $F_r=F_r(h)$ and $\rho_r =\rho_r(h) $ by
	\begin{equation}\label{eq:rcore eq def}
	\begin{split}
	F_r(h) &= \E\left[D_h\one_{\{ D_h\geq r \}} \right] = \sum_{l\geq r} \sum_{j\geq l} l\, \P( \textnormal{\Bin} (j, h)= l) \,\P(D=j);\\
	\rho_r(h) &= \P (D_h \geq r) =\sum_{l\geq r} \sum_{j\geq l} \P( \textnormal{\Bin} (j, h)= l) \,\P(D=j).
	\end{split}
	\end{equation}
	If there exists $h\in(0,1)$ such that
	\begin{equation}\label{eq:rcore condition}
	d h^2 < F_r(h),  
	\end{equation}
	then \textsf{whp}, there exists the $r$-core $K_n$ inside $G_n \sim \mathcal{G}(n,\mu)$, whose size tends to  $|K_n|/n \rightarrow \rho_r(\hat{h})$ in probability, where $\hat{h} $ is the largest $h\leq 1$ such that $dh^2 = F_r(h)$.
\end{prop}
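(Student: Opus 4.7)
The plan is to analyze the $r$-core via the standard \emph{peeling process} on the configuration-model representation of $G_n$. One should identify $h$ with the asymptotic probability that a uniformly chosen half-edge survives to the final step of peeling, i.e., its vertex ends up in the $r$-core and it is matched to another such half-edge. Under this heuristic, the expected number of alive half-edges on surviving vertices is $n F_r(h)$, since each vertex $v$ contributes $\Bin(d_v, h)\one_{\{\Bin(d_v,h)\geq r\}}$ such half-edges; on the other hand this total must equal the number of half-edges (among all $nd$) that survive on both sides of the matching, which in expectation is $nd\cdot h\cdot h$. Equating the two gives the self-consistency $dh^2=F_r(h)$, and the resulting fraction of surviving vertices is $\rho_r(\hat h)$ by the definition of $\rho_r$.

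To make this rigorous, I would construct $G_n$ via the configuration model on a degree sequence $d_1,\dots,d_n$ drawn i.i.d.\ from $\mu$, and implement peeling as a Markov chain on the degree profile: at each step, select a vertex of current degree in $\{1,\dots,r-1\}$, expose the partner of one of its half-edges in the uniform matching, and update. Then apply Wormald's differential equation method to show that the vector of degree-class sizes tracks a deterministic trajectory to within $O(\sqrt{n}\log n)$, after reparametrizing time by $h\in[0,1]$, the fraction of remaining half-edges. The associated ODE has the feature that the rate at which light vertices are generated vanishes precisely at the solutions of $dh^2=F_r(h)$, so the deterministic trajectory terminates at the largest such $h$, namely $\hat h$. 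The all-moments hypothesis $\E D^k<\infty$ is used here to control the tail of the empirical degree sequence and to validate the concentration estimates at this level of generality.

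The delicate issue is the \emph{stopping behavior} of the random peeling process: one must show that the random trajectory really does exit at $\hat h$ rather than overshooting past it or dying out at a smaller fixed point. Because the drift of the associated Markov chain vanishes at $\hat h$, plain Azuma-type concentration is not by itself sufficient, and one typically needs either the carefully constructed Lyapunov function used by Janson and \L uczak in \cite{jl07} or Kim's Poisson cloning device in \cite{k06}, which replaces the uniform matching by independent Poisson variables and thereby decouples the dependencies that obstruct direct analysis. Once the hitting point is shown to be $\hat h$ \textsf{whp}, the identity $|K_n|=|\{v:\Bin(d_v,\hat h)\geq r\}|$ reduces the size estimate to a standard second-moment computation, giving $|K_n|/n\to\rho_r(\hat h)$ in probability. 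I expect the bulk of the technical effort to lie exactly in this concentration-at-the-critical-point step, which is the main innovation of \cite{k06} and explains why that reference is invoked here.
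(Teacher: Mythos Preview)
The paper does not supply a proof of this proposition: it is quoted as a known result from \cite{k06,jl07} and used as a black box, so there is no ``paper's own proof'' to compare against. Your sketch is a fair summary of the ideas in those references---the peeling/exploration process on the configuration model, the identification of $h$ with the fraction of surviving half-edges leading to the fixed-point equation $dh^2=F_r(h)$, and the two technical routes (Wormald-type differential equations as in \cite{jl07}, or Poisson cloning as in \cite{k06}) to handle concentration near the critical point.

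One small correction: the all-moments hypothesis is not part of the proposition and is not needed for the $r$-core result; \cite{jl07} works under much weaker tail assumptions (essentially a second-moment condition). The all-moments condition in the present paper is imposed elsewhere, for Propositions~\ref{prop:3gen} and~\ref{prop:2}, not for the existence or size of the core.
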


\begin{remark}
	For Erd\H{o}s-R\'enyi random graphs $\mathcal{G}_{\textsc{er}}(n, \frac{d}{n})$, the condition (\ref{eq:rcore condition}) is equivalent to 
	\begin{equation}\label{eq:rcore ER}
	d>d_r:=\min\left\{\frac{\alpha}{\P(\textnormal{Pois}(\alpha) \geq r-1 )} \,:\, \alpha >0 \right\}.
	\end{equation}
\end{remark}

Furthermore, Theorem \ref{thm:thm1} assumed $ \mu$ to have finite $k$-th moments for all $k$. Our method generalizes to $\mu$ with a finite $K$-th moment for some large $K$. To introduce this extension, we first define $\mathcal{M}(k,M)$ to be the collection of probability distributions $\mu$ on $\mathbb{N}$ that satisfies $\E_{D\sim\mu} D^k \leq M$.

\begin{thm}\label{thm:thm4}
	Let $\theta\geq 2$ be an integer, set $k_1=6$ and let $M>0$ be a fixed constant. There exists a constant $K=K(k_1,M)$ such that for all $\mu \in \mathcal{M}(k_1,M)$  satisfying $\E_{D\sim \mu} D^{K} <\infty $ and $(\ref{eq:rcore condition})$ with $r=\theta+2$, the following holds true: there exists $p\in(0,1)$ such that \textsf{whp} over the choice of $G_n\sim \mathcal{G}(n,\mu)$, the threshold-$\theta$ contact process $(X_t)$ with probability parameter $p$ satisfies that
	\begin{enumerate}
		\item [{\textnormal 1.}]  There exist $\varepsilon_1,\gamma_1>0$ such that if $X_0 \equiv \one$, then $|X_t|\geq (1-\varepsilon_1) \rho_{\theta+2}(\hat{h}) n$ for all $t\in[0,e^{\gamma_1 n}]$ \textsf{whp}, where $\rho_{\theta+2}$ and $\hat{h}$ are as in Proposition \ref{prop:kcore}.
		\item [{\textnormal 2.}] There exist $\varepsilon_2, C_2>0$ such that if $|X_0|\leq \varepsilon_2 n$, then $X_{C_2 \log n} \equiv \textbf{0}$ \textsf{whp}.
	\end{enumerate}
\end{thm}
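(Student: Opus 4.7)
The plan is to reduce Theorem~\ref{thm:thm4} to a quantitative form of Theorem~\ref{thm:thm1} applied on the $(\theta+2)$-core $K_n$ of $G_n$. By Proposition~\ref{prop:kcore}, the hypothesis $(\ref{eq:rcore condition})$ with $r=\theta+2$ guarantees that \textsf{whp} $K_n$ exists with $|K_n|/n \to \rho_{\theta+2}(\hat h)$. The standard configuration-model analysis behind \cite{k06,jl07} shows that, conditional on its degree sequence, $K_n$ is a uniform random graph with that sequence, and the empirical degree distribution on $K_n$ converges to $\mu' := (D_{\hat h}\mid D_{\hat h}\geq\theta+2)$, which is supported on $\{\theta+2,\theta+3,\ldots\}$ and satisfies $\E_{D'\sim\mu'}[(D')^j] \leq \rho_{\theta+2}(\hat h)^{-1}\, \E_{D\sim \mu}[D^j]$ for every $j$. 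In particular the moment control required on $\mu$ passes down to $\mu'$ with the same quantitative dependence.

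For part~(1), I would couple the threshold-$\theta$ process $(X_t)$ on $G_n$ with a process $(\widetilde X_t)$ run on the induced subgraph $K_n$ alone, driven by the same Bernoulli-$p$ coin flips and started from $X_0 = \widetilde X_0 = \one$. An induction using the monotonicity of the update rule shows $X_t(v)\geq \widetilde X_t(v)$ for every $v\in V(K_n)$ and every $t$: at each step, the infected $X_t$-neighborhood of $v$ inside $G_n$ contains the infected $\widetilde X_t$-neighborhood of $v$ inside $K_n$, so if the latter triggers a potential infection then so does the former. It then suffices to prove exponential survival of $(\widetilde X_t)$ at density $(1-\varepsilon_1)|K_n|$ on $K_n$, which is exactly Theorem~\ref{thm:thm1}(1) applied to the random graph $K_n$ with limiting degree distribution $\mu'$; combined with $|K_n|/n \to \rho_{\theta+2}(\hat h)$, this yields the claimed bound.

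For part~(2), I would adapt the proof of Theorem~\ref{thm:thm1}(2) directly on $G_n$ rather than passing through $K_n$. The heuristic discussed after Theorem~\ref{thm:thm1} is that, when $|X_0|\leq \varepsilon_2 n$, the infected vertices lie (up to a rare event) in disjoint radius-$O(\log n)$ neighborhoods that are treelike, and within such a neighborhood the infection dies in $O(\log n)$ steps by a bootstrap-percolation-type estimate. The moment hypothesis on $\mu$ enters only through union bounds that control (a) the number of short cycles incident to each vertex, (b) the expected number of moderately high-degree vertices in a small neighborhood, and (c) the chance that a small vertex set contains a subgraph dense enough to self-sustain. Each of these is a polynomial-in-moments estimate whose order depends only on $\theta$ and $k_1$, so by extracting explicit exponents and choosing $K=K(k_1,M)$ large enough, the argument goes through under just $\E D^K<\infty$ and $\E D^{k_1}\leq M$. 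The main technical obstacle is exactly this quantification: tracking through the proof of Theorem~\ref{thm:thm1}(2) to identify the minimal exponent $K$, which I expect to be a routine but not entirely automatic bookkeeping exercise, and verifying in parallel that the quantitative version of Theorem~\ref{thm:thm1}(1) needed for the core $K_n$ only consumes finitely many moments of $\mu'$.
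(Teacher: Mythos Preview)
Your plan for part~(1) is essentially the paper's approach: restrict attention to the $(\theta+2)$-core $K_n$ and run the survival argument there, using monotonicity to pass the conclusion back to $(X_t)$ on $G_n$. The paper does not invoke the fact that $K_n$ is itself a configuration model conditional on its degree sequence; instead it reproves the expansion estimate (Proposition~\ref{prop:3gen}) directly on $K_n$ by first running the cut-off-line algorithm to build the core and then continuing it inside the core (this is Proposition~\ref{prop:3gen core}). Your route via the conditional law of $K_n$ is a valid alternative, though you would still need to check that the quantitative hypotheses of Proposition~\ref{prop:3gen} hold for the (random, non-i.i.d.) degree sequence of $K_n$ rather than for a fixed $\mu'$.

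For part~(2) there is a genuine gap. You correctly say you will ``adapt the proof of Theorem~\ref{thm:thm1}(2) directly on $G_n$,'' but the mechanism you then describe---treelike neighborhoods, short cycles, local bootstrap-type extinction---is the \emph{heuristic} from the introduction, not the actual proof. The proof of Theorem~\ref{thm:thm1}(2) does not use local tree structure at all; it rests entirely on the global expansion event $\mathcal{E}^{*2}(\varepsilon n,(1+\delta)\varepsilon n)$ of Proposition~\ref{prop:2gen}: if every $\varepsilon n$-set $W$ has $|W^{*2}|\leq (1+\delta)\varepsilon n$, then $|X_{t+1}|\leq_{\textsc{st}}\Bin((1+\delta)|X_t|,p)$, which shrinks when $p(1+\delta)<1$. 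Consequently your list (a)--(c) of where the moment hypothesis enters is off: the only place the higher moment is used is in Proposition~\ref{prop:2gen}, which requires $\E D^{20/\delta}<\infty$ with $\delta=\tfrac13(1-p)$. The exponent $K$ is then determined by the chain $p\mapsto\delta\mapsto 20/\delta$, where $p$ in turn is fixed by the $\varepsilon_1$ coming from part~(1), and $\varepsilon_1$ depends only on $\theta$ and the sixth-moment bound $M$ via the explicit conditions~(\ref{eq:epsilon1 def}). This gives $K\asymp \varepsilon_1^{-1}$, a concrete function of $(\theta,M)$, rather than something extracted from neighborhood-structure union bounds.
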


The Erd\H{o}s-R\'enyi random graph $\mathcal{G}_{\textsc{er}}(n, \frac{d}{n})$ is contiguous to $\mathcal{G}(n,\mu)$ with $\mu = \textnormal{Pois}(d)$ in the sense that for an event $\mathcal{A}_n$,
$$\lim_{n\rightarrow \infty}\P_{G_n\sim \mathcal{G}(n,\textnormal{Pois}(d)) } (G_n \in \mathcal{A}_n) 
= 0 \quad \textnormal{implies} \quad
\lim_{n\rightarrow \infty}\P_{G_n\sim \mathcal{G}_{\textsc{er}}(n,\frac{d}{n}) } (G_n \in \mathcal{A}_n) 
=0. $$
 (see, e.g., \cite{k06}, Theorem 1.1, or \cite{jansonrandomgraphs} for a detailed introduction.) Thus, for $\mathcal{G}_{\textsc{er}}(n,\frac{d}{n})$, we have an analogue of Theorem \ref{thm:thm1}  for $d>d_{\theta+2}$.

\begin{cor}\label{cor:er}
	 Let $\theta\geq 2$ be an integer and $d>d_{\theta+2}$ for $d_{\theta+2}$ defined as (\ref{eq:rcore ER}). \textsf{Whp} over the choice of $G_n \sim \mathcal{G}_{\textsc{er}}(n, \frac{d}{n})$, the discrete-time threshold-$\theta$ contact process $(X_t)$ on $G_n$ satisfies the following:
	 
	 \begin{enumerate}
	 	\item[{\textnormal 1.}] There exist $p_1 \in (0,1)$ and $\varepsilon_1,\gamma_1>0$ such that if $p\geq p_1$ and $X_0\equiv \one$, then $|X_t|\geq (1-\varepsilon_1)\rho_{\theta+2}(\hat{h}) n$ for all $t\in[0, e^{\gamma_1 n}]$ \textsf{whp}, where $\rho_{\theta+2}$ and $\hat{h}$ are as in Proposition \ref{prop:kcore}.
	 	
	 	\item [{\textnormal 2.}] For any $p\in(0,1)$, there exist $\varepsilon_2(p), C_2(p)>0$ such that if $|X_0|\leq \varepsilon_2 n$, then $X_{C_2 \log n} \equiv \textbf{0}$ \textsf{whp}. 
	 \end{enumerate}
\end{cor}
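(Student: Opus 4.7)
The plan is to reduce Corollary \ref{cor:er} to Theorem \ref{thm:thm4} applied to the Poisson configuration model $\mathcal{G}(n,\textnormal{Pois}(d))$, and then transfer the conclusion to $\mathcal{G}_{\textsc{er}}(n,d/n)$ via the contiguity displayed just above the corollary.

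First I would verify the hypotheses of Theorem \ref{thm:thm4} for $\mu=\textnormal{Pois}(d)$. Every moment $\E D^k$ of $D\sim\textnormal{Pois}(d)$ is finite, so the $K$-th moment condition and the membership $\mu\in\mathcal{M}(6,M)$ (for a suitable $M=M(d)$) hold automatically. Using the Poisson thinning identity $D_h\sim\textnormal{Pois}(dh)$, a short calculation gives $F_{\theta+2}(h)=dh\,\P(\textnormal{Pois}(dh)\geq\theta+1)$, so the core condition (\ref{eq:rcore condition}) with $r=\theta+2$ reduces to the existence of $h\in(0,1)$ with $h<\P(\textnormal{Pois}(dh)\geq\theta+1)$, which after substituting $\alpha=dh$ is exactly $d>d_{\theta+2}$, our standing hypothesis. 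Theorem \ref{thm:thm4} thus supplies some $p^{\ast}\in(0,1)$ and constants $\varepsilon_1,\gamma_1,\varepsilon_2,C_2>0$ for which both the survival and extinction conclusions hold \textsf{whp} over $G_n\sim\mathcal{G}(n,\textnormal{Pois}(d))$.

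Next I would upgrade the single threshold $p^{\ast}$ supplied by Theorem \ref{thm:thm4} to the range $p\geq p_1:=p^{\ast}$ required by part 1, using the standard monotone coupling in $p$: assign one $\textnormal{Unif}[0,1]$ variable $U_{v,t}$ per site and time, and declare $v$ infected at step $t+1$ iff it has at least $\theta$ infected neighbors at time $t$ and $U_{v,t}<p$. Under this coupling, $X_t^{(p)}\subseteq X_t^{(p')}$ pointwise at every $t$ whenever $p\leq p'$ and $X_0^{(p)}=X_0^{(p')}$; starting from $\one$, this immediately gives $|X_t^{(p)}|\geq(1-\varepsilon_1)\rho_{\theta+2}(\hat h)n$ on the good event for every $p\geq p_1$. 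For part 2, the low-density die-out argument (whose mechanism is sketched after Theorem \ref{thm:thm1} and whose implementation in the proof of Theorem \ref{thm:thm4} does not use $p$ being large, only the local treelike structure) applies verbatim for every $p\in(0,1)$, yielding the constants $\varepsilon_2(p), C_2(p)$.

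Finally, I would transfer from $\mathcal{G}(n,\textnormal{Pois}(d))$ to $\mathcal{G}_{\textsc{er}}(n,d/n)$ by contiguity. After integrating out the process randomness conditional on $G_n$, both conclusions become graph events $\mathcal{A}_n\in\sigma(G_n)$ of the shape $\{G_n:\P(\textnormal{process conclusion}\mid G_n)\geq 1-\delta_n\}$ for some $\delta_n\to 0$, whose complements have vanishing probability under $\mathcal{G}(n,\textnormal{Pois}(d))$. The contiguity statement immediately preceding the corollary then yields $\P_{\mathcal{G}_{\textsc{er}}(n,d/n)}(\mathcal{A}_n^c)\to 0$, which is precisely Corollary \ref{cor:er}. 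The only real point of care is this last reformulation—recasting the nested ``\textsf{whp} over $G_n$, \textsf{whp} over the process'' statement as a single graph event so that contiguity applies cleanly—and everything else is routine once Theorem \ref{thm:thm4} is in hand.
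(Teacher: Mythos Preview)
Your proposal is correct and follows essentially the same route as the paper: reduce to the Poisson configuration model via contiguity, note that $\textnormal{Pois}(d)$ has all polynomial moments so the extinction argument of Section~\ref{subsec:thm1-2} applies for every $p\in(0,1)$, and invoke the survival proof underlying Theorem~\ref{thm:thm4} for part~1. Your explicit verification that~(\ref{eq:rcore condition}) for $\mu=\textnormal{Pois}(d)$ is equivalent to $d>d_{\theta+2}$, your monotone coupling to pass from a single $p^\ast$ to all $p\geq p_1$, and your reformulation of the nested \textsf{whp} as a graph event for the contiguity transfer are all welcome details that the paper leaves implicit.
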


\subsection{Discussons and further problems}

\subsubsection{Emergence of metastability}



It turns out that the existence of $(\theta+2)$-cores inside random graphs is not a  necessary condition for metastable states to emerge. For instance, consider $\mu= {\eta} \delta_{3} + (1-\eta) \delta_4 $, where $\delta_a$ is a Dirac point measure at $a$. Then, clearly $G_n\sim\mathcal{G}(n,\mu)$ has an empty $4$-core. However, based on the same method used to prove Theorem \ref{thm:thm1}-1, one can see that the threshold-$2$ contact process on $G_n$ can display an exponentially long survival on $G_n$, for small enough $\eta$ and large enough $p$.

\vspace{2mm}
\noindent \textbf{Question 1.} Find a necessary (and sufficient) condition on $\mu$ for the threshold-$\theta$ contact process on $G_n\sim \mathcal{G}(n,\mu)$ to exhibit an exponentially long survival for large enough $p$.

\subsubsection{Discontinuous phase transition on infinite Galton-Watson trees}

One may be interested in studying the threshold-$\theta$ contact process on infinite Galton-Watson trees, which are local weak limits of random graphs (for details, see, e.g., \cite{DemboMontanari2010}, Section 2.1). However, there does not seem to be an obvious way to translate the results on random graphs (Theorems \ref{thm:thm1}, \ref{thm:thm4}) to those on infinite Galton-Watson trees, which was possible for random $d$-regular graphs and the infinite $d$-regular tree \cite{cd13}. In \cite{cd13}, they used the fact that the local neighborhoods of $\mathcal{G}(n,d)$ look the same as a $d$-regular tree for $n-o(n)$ vertices. Nevertheless, in $\mathcal{G}(n,\mu)$, only a very small fraction of vertices has a local neighborhood structure that looks like a \textit{fixed} Galton-Watson tree. 

\vspace{2mm}
\noindent \textbf{Question 2.} Let $\mu$ be an unbounded degree distribution that has finite $k$-th moments for all $k>0$ and is bounded by $\theta+1$ from below. Show  that for almost every instance $T$ of the infinite Galton-Watson tree with offspring distribution $\mu$, the threshold-$\theta$ contact process on $T$ exhibits a discontinuous phase transition at $p_c\in(0,1)$.
\vspace{2mm}

\noindent Note that if $\mu$ is bounded, say, by $M$, we know the answer by comparing the Galton-Watson tree with the infinite $M$-regular tree \cite{cd13}.

\subsubsection{Extinction on the infinite $(\theta+1)$-regular tree}
Although we proved in Theorem \ref{thm:thm2} that the threshold-$\theta$ contact process on $G_n\sim \mathcal{G}(n,\theta+1)$ does not have a metastable state, it is not clear if the process on the infinite $(\theta +1)$-regular tree always dies out. The proof for Theorem \ref{thm:thm2} cannot be applied, since there are no cycles in the infinite tree, which played a huge role in $\mathcal{G}(n,\theta+1)$.

\vspace{2mm}
\noindent \textbf{Question 3.} Does the threshold-$\theta$ contact process on the infinite $(\theta+1)$-regular tree with all-infected initial condition die out in finite time?

\subsection{Main techniques}\label{subsec:main techniques}
We begin with explaining the main ideas in the proof  of Theorem \ref{thm:thm1}. Working with the generalized setting (Theorem \ref{thm:thm4}) is based on the same strategy, but requires more technicality. 

Let $W$ be the set of infected sites in $G_n\sim \mathcal{G}(n,\mu)$ at some time $t$. Then, to understand the infections at time $t+1$, the information on the following set is  crucial:
\begin{equation}\label{eq:W* def}
W^{*\theta} := \{u: u \textnormal{ has at least }\theta \textnormal{ neighbors in } W\}.
\end{equation}
If we know the size of $W^{*\theta}$, then $|X_{t+1}| \approx p|W^{*\theta}|$.

In order to establish long survival results, we introduce an event $\mathcal{F}^{*l} (m_1, m_2) $ such that
\begin{equation}\label{eq:F*def}
\mathcal{F}^{*l}(m_1,m_2):= 	\{ \textnormal{for all } W \subset V \textnormal{ of size } m_1, ~|W^{*l}|\geq m_2 \},
\end{equation}
where $V$ denotes the set of vertices of $G_n$. One can find a similar definition in \cite{cd13}. Then the key property in our argument can be stated as follows.

\begin{prop}\label{prop:3gen}
	Let $\mu$ be a distribution on $\mathbb{N}$ that satisfies $\E_{D\sim \mu} D^6 = M_6<\infty$ and is lower bounded by $\theta+2$. Then, there exists a constant $\varepsilon_1(\theta,M_6)>0$ such that for $G_n \sim \mathcal{G}(n,\mu)$,
	\begin{equation}\label{eq:prop3}
	\P \left(G_n \in \mathcal{F}^{*\theta} \left( \lceil (1-\varepsilon_1) n\rceil, \lceil(1-0.9\varepsilon_1)n\rceil \right)\right) 
	\geq
	 1- \frac{2\varepsilon_1^{1/5}}{(\varepsilon_1 n)^5 },
	\end{equation}
	for all large enough $n$.
\end{prop}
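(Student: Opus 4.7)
The plan is to apply a first-moment union-bound argument in the configuration-model representation of $G_n\sim \mathcal{G}(n,\mu)$.

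\emph{Reformulation.} Fix $W\subseteq V$ with $|W|=\lceil(1-\varepsilon_1)n\rceil$ and set $U:=V\setminus W$, so $|U|=\lfloor\varepsilon_1 n\rfloor=:u$. Since every vertex $v$ has degree $d_v\geq \theta+2$, the condition $v\notin W^{*\theta}$ (i.e., $|N(v)\cap W|\leq \theta-1$) forces $|N(v)\cap U|\geq d_v-\theta+1\geq 3$. Writing $A_U:=\{v:|N(v)\cap U|\geq 3\}$ and $b:=\lceil 0.9\varepsilon_1 n\rceil$, the failure event $\{G_n\notin \mathcal{F}^{*\theta}(\lceil(1-\varepsilon_1)n\rceil,\lceil(1-0.9\varepsilon_1)n\rceil)\}$ is contained in $\bigcup_{|U|=u}\{|A_U|\geq b\}$, so
\[
\P(\mathrm{failure})\leq \sum_{|U|=u}\sum_{|B|=b}\P(B\subseteq A_U).
\]

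\emph{Per-pair probability.} For fixed $(U,B)$, the event $B\subseteq A_U$ forces, for each $v\in B$, three distinct half-edges of $v$ to be paired to half-edges incident to $U$. Bounding $\mathbf{1}[|N(v)\cap U|\geq 3]$ by the number of such 3-subsets and summing over the $\prod_{v\in B}\binom{d_v}{3}$ choices, a direct uniform-matching computation gives
\[
\P(B\subseteq A_U)\leq \prod_{v\in B}\binom{d_v}{3}\left(\frac{N_U}{N-6b}\right)^{3b},
\]
where $N=\sum_v d_v$ and $N_U=\sum_{v\in U}d_v$.

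\emph{Summations.} For the sum over $B$, I would invoke the elementary-symmetric inequality $\sum_{|B|=b}\prod_{v\in B}\binom{d_v}{3}\leq \tfrac{1}{b!}\bigl(\sum_v \binom{d_v}{3}\bigr)^b$. Since $(d_v)$ are i.i.d.\ from $\mu$ and $\E\binom{D}{3}^2\lesssim \E D^6 \leq M_6$, Chebyshev gives $\sum_v \binom{d_v}{3}\leq 2nC_3$ \textsf{whp}, with $C_3=\E\binom{D}{3}$. For the sum over $U$, I bound $(N_U)^{3b}\leq (\max_{|U|=u}N_U)^{3b}$; applying Markov's inequality to the sixth moment yields $\#\{v:d_v\geq t\}\leq M_6 n/t^6$, hence $d_{(i)}\leq (M_6 n/i)^{1/6}$ in decreasing order, giving $\max_{|U|=u}N_U\leq \sum_{i=1}^u d_{(i)}\leq C_0\,\varepsilon_1^{5/6}\,n$ \textsf{whp} for a constant $C_0=C_0(M_6)$. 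Combining these bounds with $\binom{n}{u}\leq (e/\varepsilon_1)^{\varepsilon_1 n}$ and Stirling's approximation, the right-hand side telescopes to an expression of the form $\bigl(C_1 \varepsilon_1^{c}\bigr)^{\varepsilon_1 n}$ for some universal $c>0$. For $\varepsilon_1$ small enough in terms of $\theta$ and $M_6$ this is exponentially small in $n$, which comfortably beats the target polynomial rate $\tfrac{2\varepsilon_1^{1/5}}{(\varepsilon_1 n)^5}$.

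\emph{Main obstacle.} The delicate step is the sharp control of $\max_{|U|=u}N_U$: since $\mu$ may have a heavy tail, a small number of high-degree vertices could disproportionately inflate $N_U$ for some bad $U$, and a naive bound $N_U\leq \varepsilon_1 n\cdot d$ (using the mean) is far too weak to offset the $\binom{n}{\varepsilon_1 n}$ choices of $U$ in the union bound. The $\varepsilon_1^{5/6}$ gain extracted from the sixth-moment order-statistic estimate is precisely what allows the per-$(U,B)$ probability — carrying a factor $(N_U/N)^{3b}$ with $b=\Theta(\varepsilon_1 n)$ — to beat both $\binom{n}{u}$ and $(S_3)^b/b!$ simultaneously. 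A weaker moment hypothesis would require a more careful truncation separating typical vertices ($d_v\leq T$) from heavy ones and handling their contributions to $N_U$ independently.
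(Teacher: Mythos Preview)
Your approach is sound and genuinely different from the paper's. Both reformulate the event via the complement $U$ of size $\varepsilon_1 n$ and exploit that $v\notin W^{*\theta}$ forces at least three edges from $v$ into $U$; both then use the sixth-moment control on $\max_{|U|=\varepsilon_1 n}N_U$ to gain a power of $\varepsilon_1$ per such edge, enough to beat the entropy $\binom{n}{\varepsilon_1 n}$ in a union bound. The paper, however, does not sum over pairs $(U,B)$ directly. Instead it uses the cut-off line algorithm: after exposing all matches of half-edges in $U$, the edge-counts $e(i,U)$ are stochastically dominated by \emph{independent} binomials $Y_i\sim\Bin(D_i,1-h)$ with $1-h\asymp N_U/N$, and then $|\{i:Y_i\geq D_i-\theta+1\}|$ is handled by a single binomial large-deviation bound. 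This decoupling means the paper never needs $\sum_v\binom{d_v}{3}=O(n)$, only the bound on $N_U$.

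Two technical points in your write-up deserve attention. First, the step ``Markov gives $\#\{v:d_v\geq t\}\leq M_6 n/t^6$, hence $d_{(i)}\leq (M_6 n/i)^{1/6}$'' is heuristic: Markov bounds the \emph{expectation} of the count, not the count itself. Turning this into a rigorous tail bound on $\max_{|U|=u}N_U$ is precisely the content of the paper's Lemma~\ref{lem:sum Di}; with $k=6$ and $\eta=1/5$ it yields $N_U\leq 3M_6^{1/6}\varepsilon_1^{4/5}n$ (not $\varepsilon_1^{5/6}$) with failure probability $2\varepsilon_1^{1/5}/(\varepsilon_1 n)^5$, which is exactly the error rate in the proposition. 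Second, and more substantively, your Chebyshev bound on $\sum_v\binom{d_v}{3}$ gives failure probability only $O(1/n)$: under $\E D^6<\infty$ you only know that $\binom{D}{3}$ has finite second moment, and heavy-tail large deviations cannot do better here. This is strictly weaker than the target $O(n^{-5})$, so as written your argument proves the proposition with error $O(1/n)$ rather than $2\varepsilon_1^{1/5}/(\varepsilon_1 n)^5$. That is still \textsf{whp} and fully sufficient for the application in Section~\ref{subsec:thm1-1}, but it does not match the statement. The paper's cut-off line route sidesteps this cleanly: by avoiding the product $\prod_{v\in B}\binom{d_v}{3}$ altogether, its only non-exponential error comes from the $N_U$ control.
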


For the purpose of proving short survival, it suffices to consider the case $\theta=2$. Define $\mathcal{E}^{*2}(m_1,m_2)$ to be an event that
\begin{equation}\label{eq:E*def}
\mathcal{E}^{*2}(m_1,m_2):= 	\{ \textnormal{for all } W \subset V \textnormal{ of size } m_1, ~|W^{*2}|\leq m_2 \}. 
\end{equation}

\begin{prop}\label{prop:2}
	Let $\mu$ be a distribution on $\mathbb{N}$ that  has finite $k$-th moments for all $k>0$. Then, for all $\delta>0$, there exists $\varepsilon_2(\delta,\mu)>0$ such that for $G_n \sim \mathcal{G}(n,\mu)$ and for any $\varepsilon(n)\in (0,\varepsilon_2)$ such that  $\varepsilon n$ is an integer and $\lim_{n\rightarrow \infty} \varepsilon n = \infty$,
	\begin{equation}\label{eq:prop2}
	\P \left(G_n \in \mathcal{E}^{*2} (\varepsilon n, (1+\delta)\varepsilon n) \right) \geq 1-  \frac{2\varepsilon^{1/4} }{(\varepsilon n)^{20/\delta}} - e^{-c \delta \varepsilon n },
	\end{equation}
	for all large enough $n$, where $c>0$ is an  absolute constant.
\end{prop}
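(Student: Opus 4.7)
The plan is to work in the configuration-model representation of $\mathcal G(n,\mu)$ and split the failure probability into (a) a ``bad degree sequence'' event of polynomial probability, accounting for the $\varepsilon^{1/4}/(\varepsilon n)^{20/\delta}$ term, and (b) an exponentially small event on a good degree sequence, handled by Chernoff plus a union bound and producing the $e^{-c\delta\varepsilon n}$ term. Throughout, for $W\subseteq V$ I write $D_W = \sum_{v\in W}d_v$ and $p = D_W/(2m)$ with $2m = \sum_v d_v$.

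First I would construct a good event $\mathcal G_0$ on the degree sequence: using that $\mu$ has all finite moments, a Markov bound on the $K$-th moment sum, for $K=K(\delta,\mu)$ large enough, yields $\P(\mathcal G_0^c) \leq \varepsilon^{1/4}/(\varepsilon n)^{20/\delta}$, under which the maximum degree is suitably truncated and the sums $\sum_v d_v^j$ are controlled. The key consequence is that even the worst $W$ of size $\varepsilon n$ (obtained by taking the top-degree vertices) still satisfies $D_W \leq C \varepsilon n$ because of the moment control of the tail of $\mu$, so $p \leq C\varepsilon$ holds uniformly over such $W$, for a constant $C=C(\mu)$.

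On $\mathcal G_0$, fix $W$ with $|W|=\varepsilon n$. For each $v\in V$, the count $Y_v$ of edges from $v$ to $W$ is distributed in the configuration model as a close approximation of $\Bin(d_v,p)$, and the indicators $\mathbf{1}\{Y_v\geq 2\}$ are negatively associated (a standard property of uniform matchings). Hence
\[
\mu_W \;:=\; \E|W^{*2}| \;=\; \sum_v \P(Y_v\geq 2) \;\leq\; \sum_v \binom{d_v}{2}p^2 \;\leq\; C'\varepsilon^2 n,
\]
and the Chernoff bound for sums of negatively associated Bernoullis in the rare-event regime, with $t=(1+\delta)\varepsilon n \gg \mu_W$, gives
\[
\P\bigl(|W^{*2}|\geq t\bigr) \;\leq\; \exp\bigl(-t\log(t/\mu_W) + t\bigr) \;\leq\; \exp\bigl(-(1+\delta)\varepsilon n\,\log(1/\varepsilon)\,(1-o_\varepsilon(1))\bigr).
\]
Union-bounding over the $\binom{n}{\varepsilon n}\leq \exp(\varepsilon n\log(e/\varepsilon))$ choices of $W$, the exponents combine to
\[
\P\bigl(\exists W\colon |W^{*2}|> (1+\delta)\varepsilon n \,\bigm|\, \mathcal G_0\bigr) \;\leq\; \exp\bigl(-\delta\varepsilon n\,\log(1/\varepsilon)\,(1-o_\varepsilon(1))\bigr) \;\leq\; e^{-c\delta\varepsilon n}
\]
once $\varepsilon\leq \varepsilon_2(\delta)$ is sufficiently small, which together with the bound on $\P(\mathcal G_0^c)$ gives the proposition.

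The main obstacle I anticipate is the Chernoff step: the variables $Y_v$ are coupled through the random matching, so the argument needs the matching-level negative association to reduce to the independent-Bernoullis case, and it must use the rare-event (Poissonian) form of Chernoff that exploits $t/\mu_W \approx 1/\varepsilon$ to produce the $\log(1/\varepsilon)$ factor in the rate. That $\log(1/\varepsilon)$ factor is precisely what defeats the $\binom{n}{\varepsilon n}$ entropy appearing in the union bound; losing it (for instance by using only a crude second-moment/Markov estimate) breaks the argument entirely. Calibrating the max-degree truncation in $\mathcal G_0$ against the moment exponent $K$ is the second delicate point and is what produces the specific $(\varepsilon n)^{-20/\delta}$ polynomial factor in the final bound.
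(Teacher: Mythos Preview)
Your overall architecture---split into a polynomially-rare ``bad degree sequence'' event and an exponentially-rare event handled by a Poissonian Chernoff bound plus union bound---matches the paper's. But there is a genuine error in the key quantitative input.

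The claim that the worst $W$ of size $\varepsilon n$ satisfies $D_W\le C\varepsilon n$ for a constant $C=C(\mu)$ is false whenever $\mu$ is unbounded, regardless of how many moments it has. Taking $W$ to be the top $\varepsilon n$ degrees, one has $D_W/(\varepsilon n)\to \E[D\mid D\ge q_\varepsilon]$ with $q_\varepsilon$ the $(1-\varepsilon)$-quantile of $\mu$, and this diverges as $\varepsilon\to 0$. What one \emph{can} prove (this is the paper's Lemma~\ref{lem:sum Di}) is
\[
D_W \;\le\; C\,\varepsilon^{\,1-(1+\eta)/k}\,n \qquad\text{uniformly over }|W|=\varepsilon n,
\]
with failure probability $O(\varepsilon^\eta/(\varepsilon n)^{k-1})$, using the $k$-th moment. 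The small loss of a power of $\varepsilon$ is unavoidable and is exactly what forces $k=20/\delta$: with $p\lesssim \varepsilon^{1-\alpha}$ one only gets $\mu_W\lesssim \varepsilon^{2-2\alpha}n$ and hence $\log(t/\mu_W)\ge (1-2\alpha)\log(1/\varepsilon)+O(1)$, so the Chernoff rate becomes $(1+\delta)(1-2\alpha)\varepsilon n\log(1/\varepsilon)$, which beats the entropy $\varepsilon n\log(e/\varepsilon)$ only if $\alpha$ is small compared to $\delta$. Your computation as written (with $t/\mu_W\approx 1/\varepsilon$ exactly) does not reflect this, and the calibration between the moment exponent and $\delta$ is missing.

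On the coupling side, the paper does not appeal to negative association of the matching. Instead it uses Kim's cut-off line algorithm to produce genuinely \emph{independent} $Y_i\sim\Bin(D_i,1-h)$ that stochastically dominate $e(i,W)$ once the cut-off height is bounded below; combined with a further truncation of the few vertices of degree $\ge (C/\varepsilon)^{\delta/20}$ (the event $\mathcal A_n$), it reduces to a single binomial large deviation. Your negative-association route is plausible and, if carried out carefully, should also work, but the paper's device is cleaner and sidesteps having to verify NA for the specific functionals $\mathbf 1\{Y_v\ge 2\}$ under a uniform perfect matching.
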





	Chatterjee and Durrett \cite{cd13} also derived similar estimates for random regular graphs as the above propositions.  Thanks to regularity, they had better  error probabilies than above, which was roughly $\exp(-c\varepsilon n\log(1/\varepsilon) )$. In particular, the errors were $o(1)$ even for $\varepsilon n = O(1)$.  In our case, existence of high degree vertices prevents us from achieving the error  as small as in \cite{cd13}, especially when $\varepsilon$ is very small.

It turns out that Theorem \ref{thm:thm1}-1 follows in a  straight-forward way from Proposition \ref{prop:3gen}, based on a similar argument as in Sections 2 of \cite{cd13}. In showing Theorem \ref{thm:thm1}-2,  one nees to be more careful after obtaining a small size of infections, since the bound (\ref{eq:prop2}) works for $\ep$ such that $\lim_{n\rightarrow \infty} \varepsilon n = \infty$. To deal with such difficulty, we \textit{bound} the changes of infection size by a certain biased random walk which is easier to deal with.
For the proof of  Theorem \ref{thm:thm4}, we require a generalized version of Proposition \ref{prop:2} which we introduce in Section \ref{sec:structure}. 

In \cite{cd13}, they  had the conclusions (\ref{eq:prop3}), (\ref{eq:prop2}) as byproducts of an analysis on $W^{*1}$, but this approach worked only for random regular graphs.
To overcome this difficulty, we  rather study $W^{*2}$ and $W^{*\theta}$ directly, via describing the random matching among half-edges by certain  binomial type  random variables. We carry out this step using the ``cut-off line algorithm'' \cite{k06},  appropriately modified to fit with our setting. In particular, since high-degree vertices  cause problems in applying the algorithm in a way we want, we introduce a method of proving the propositions after getting rid of those problematic vertices.

\subsection{Definition of the random graph $\mathcal{G}(n,\mu)$}\label{subsec:prelim}

Throughout the paper, the random graph $G_n \sim \mathcal{G}(n,\mu)$ is defined in terms of the \textit{configuration model} which is generated as follows.

\begin{itemize}
	\item Let $D_1,\ldots , D_n$ be $n$ i.i.d. samples from $\mu$ conditioned on $\{\sum_{i=1}^n D_i \textnormal{ is even} \}$.
		
	\item Pair all the half-edges uniformly at random.
\end{itemize}
If $\E_{D\sim \mu} D^2 <\infty$, this model is also contiguous with the random \textit{simple} graph chosen uniformly at random among all graphs with degree sequence $\{D_i\}$ (see, e.g., \cite{j09, vanderhofstad17}). Note that the second moment condition $\E D^2 <\infty$ falls into our assumptions in Theorems \ref{thm:thm1} and \ref{thm:thm4}.

\subsection{Organization}

The rest of the paper is organized as follows. In Section \ref{sec:structure}, we discuss the structural properties of $\mathcal{G}(n,\mu)$ and  prove  Propositions \ref{prop:3gen} and \ref{prop:2}. Then, we settle Theorems \ref{thm:thm1}, \ref{thm:thm4} and Corollary \ref{cor:er} in Section \ref{sec:thm}. Finally, the last section is devoted to the proof of Theorem \ref{thm:thm2}.

\section{Structural analysis of random graphs}\label{sec:structure}

The purpose of this section is to establish  Propositions \ref{prop:3gen} and a generalized version of Proposition \ref{prop:2}, which will be used in Section \ref{sec:thm} to prove Theorems \ref{thm:thm1} and \ref{thm:thm4}. Recalling the definitoins of event $\mathcal{E}^{*2}(m_1, m_2)$ in (\ref{eq:E*def}), the generalization of Proposition \ref{prop:2} can be stated in the following way. 


\begin{proposition}\label{prop:2gen}
	Let $\delta \in (0,1)$ be any given number. For any degree distribution $\mu$ such that $\E_{D\sim \mu } D^{k_0} <\infty$ with $k_0 = 20/\delta$, there exists a constant $\varepsilon_2 (\delta, \mu)$ such that for $G_n \sim \mathcal{G}(n,\mu)$  and for any $\varepsilon(n) \in (0,\varepsilon_2)$ such that $\varepsilon n$ is an integer and $\lim_{n\rightarrow \infty}\varepsilon n =\infty$,
	\begin{equation*}
	\P \left(G_n \in \mathcal{E}^{*2} ( \varepsilon n, (1+\delta) \varepsilon n) \right) \geq 1- \frac{C \varepsilon^{1/4}}{(\varepsilon n)^{k_0 -1}} -e^{-c\delta\varepsilon n},
	\end{equation*}
	for all large enough $n$, where $c,C>0$ are some absolute constants.
\end{proposition}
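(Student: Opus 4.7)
I would follow the same strategy as in the proof of Proposition~\ref{prop:2}, carefully tracking how the argument depends on the moment assumption on $\mu$. The proof splits into three ingredients: (i) truncate vertices of very high degree, (ii) prove a sharp tail bound for $|W^{*2}|$ at a single fixed $W$, and (iii) combine via a union bound over $W$.

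First, I would introduce a degree threshold $T = T(\ep, n, \mu)$ and the set $B := \{v : \deg(v) > T\}$ of \emph{heavy} vertices. Writing $|B|$ as a sum of i.i.d.\ Bernoullis $\one_{D_i > T}$ with mean at most $M/T^{k_0}$ (by Markov applied with the $k_0$-th moment), an appropriate tail estimate shows that for a suitable choice of $T$ the event $\{|B| \leq \delta \ep n /4\}$ holds with probability at least $1 - C \ep^{1/4}/(\ep n)^{k_0-1}$. On this event the trivial bound $|W^{*2}| \leq |W^{*2} \cap B^c| + |B|$ reduces the claim to showing that $|W^{*2} \cap B^c| \leq (1 + \tfrac{3\delta}{4}) \ep n$ uniformly in $W \subset V$ with $|W| = \ep n$.

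Next, fixing such $W$, I would generate the matching via the cut-off line algorithm of Kim \cite{k06} (as in the proof of Proposition~\ref{prop:2}). After truncating vertex degrees at $T$, the algorithm produces a binomial-type representation of $|W^{*2} \cap B^c|$ whose expected value is $o(\ep n)$, and a Chernoff-type tail estimate then yields
\begin{equation*}
\P\bigl(|W^{*2} \cap B^c| \geq (1 + \tfrac{3\delta}{4}) \ep n\bigr) \leq (C \ep^{\gamma})^{(1+3\delta/4)\ep n}
\end{equation*}
for some $\gamma > 0$ depending on $k_0$ and $\delta$. A union bound over the $\binom{n}{\ep n} \leq (e/\ep)^{\ep n}$ sets $W$ of size $\ep n$ then yields $(C' \ep^{\gamma'})^{\ep n} \leq e^{-c\delta \ep n}$ for $\ep$ small enough (using $\ep n \to \infty$), and combined with Step~1 this proves the claim.

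I expect the principal obstacle to be tuning $T$ so that the two estimates are \emph{compatible}: $T$ must be large enough that the heavy-vertex count $|B|$ is controlled at the prescribed polynomial rate $C\ep^{1/4}/(\ep n)^{k_0-1}$, yet small enough that both the expectation and the Chernoff tail of $|W^{*2} \cap B^c|$ remain strong enough to absorb the $(e/\ep)^{\ep n}$ union-bound loss with room to spare in $\delta$. Under the all-moments hypothesis of Proposition~\ref{prop:2} this compatibility is easy to arrange, but with only $k_0$ finite moments one must carefully balance the truncation against the worst-case total degree $d_W = \sum_{v\in W}\deg(v)$ that a single set $W$ of size $\ep n$ can contribute; navigating this balance is the technical heart of the argument and is what forces the polynomial error term $C \ep^{1/4}/(\ep n)^{k_0-1}$ in the conclusion.
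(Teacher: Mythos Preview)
Your overall plan---truncate high degrees, run the cut-off line algorithm to get a binomial-type bound on $|W^{*2}|$ for a single $W$, then union bound---is exactly the route the paper takes. But you have the bookkeeping of the two error terms reversed, and this is not cosmetic.

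The event $\{|B|\le \tfrac{\delta}{4}\varepsilon n\}$ is controlled by a straightforward Chernoff bound: with $T\asymp (\delta\varepsilon)^{-1/k_0}$ one has $\E|B|\le \tfrac{1}{3}\delta\varepsilon n$ by Markov, and then $\P(|B|>\tfrac{1}{2}\delta\varepsilon n)\le e^{-c\delta\varepsilon n}$. This is the source of the \emph{exponential} term, not the polynomial one. The polynomial term $C\varepsilon^{1/4}/(\varepsilon n)^{k_0-1}$ arises from a step you leave implicit: controlling the cut-off line height $H_T$ requires a \emph{uniform} bound on $d_W=\sum_{i\in W}D_i$ over all $|W|=\varepsilon n$, and this in turn needs a heavy-tail large-deviation inequality of Nagaev type for $\sum_i D_i\one_{\{D_i\ge s\}}$ (the paper's Lemma~\ref{lem:sum Di}). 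It is precisely this estimate that costs $2\varepsilon^{1/4}/(\varepsilon n)^{k_0-1}$ and cannot be improved to exponential under a finite-$k_0$-moment hypothesis. Your final paragraph correctly flags $d_W$ as the technical crux, but your Step~1 assigns the wrong error rate to the wrong event; as written, you have no mechanism producing the polynomial term, and your Step~2 tacitly assumes control of $H_T$ that you have not yet established.
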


In the following subsection, we introduce the ``cut-off line algorithm'' \cite{k06}. This plays a crucial role in the proof of the above propositions, which is done in Sections \ref{subsec:2gen} and \ref{subsec:3gen}. 

\subsection{The cut-off line algorithm}\label{subsec:cutoffline}
The cut-off line algorithm \cite{k06} is a simple tool that is very useful in demonstrating structural properties of random graphs. Kim \cite{k06} used this method to give a sharp estimate on the $k$-core threshold of Erd\H{o}s-R\'enyi random graphs. It turns out that the method can also be very useful in the study of $\mathcal{G}(n,\mu)$ as shown in \cite{bnns}. In this subsection, we introduce the algorithm and derive some properties that are used in Sections \ref{subsec:2gen} and \ref{subsec:3gen}.

Suppose that the degree sequence $\{ D_i\}_{i=1}^n$ of the graph $G_n \sim \mathcal{G}(n,\mu)$ is given by i.i.d. samples from $\mu$. Let $V=[n]:=\{1, \ldots , n\}$ be the vertex set of $G_n$ so that $\deg(i)=D_i$. Let $d = \frac{1}{n} \sum_{i=1}^n D_i$ and $d_0 = \E_{D \sim \mu} D$.

Define $\{a_j \}_{j=1}^{dn}$ be the collection of half-edges. 
For each half-edge $a_j$, let $v_j $ be the vertex that $a_j$ is attached to. In other words, the half-edges are associated with the vertices by a map $v: [{dn}] \rightarrow [n]$  satisfying  $|\{j \in[dn]: v_j=i \} |=D_i $ for each $i\in[n]$. To generate the full graph $G_n$, we have to perform a uniform random perfect matching among all half-edges $\{a_j\}$. 

In the cut-off line algorithm, we consider the rectangle $R= [ n ] \times [0,1]$. To be specific, on the $xy$-plane, vertices are located at points $1,\ldots, n$ on the $x$-axis, and each vertex is assigned with a vertical interval of length $1$ from $y=0$ to $y=1$. Then, we assign $B_j \sim \,\textnormal{i.i.d.}$ Unif$[0,1]$ to each half-edge $a_j$, and mark the points
$$(v_j, B_j), \quad j=1,\ldots, dn $$
on the rectangle $R$. Note that since $\{B_j\}$ is generated after the map $v:[dn]\rightarrow [n]$ is chosen, $\{B_j\}$ is independent of $v$.

 Suppose that we are attempting to match $a_1$, the first half-edge, to its counterpart chosen uniformly at random. One way to do this is to find $l_1$ such that $B_{l_1} = \max \{B_j : j\in[dn]\setminus\{1\} \}$ and pair $a_1$ with $a_{l_1}$. Since we found the matching pair of $a_1$, we can both remove $a_1$ and $a_{l_1}$ among the collection of half-edges, and repeat the process for $a_2$ (unless $l_1=2$) to find its pair. This procedure defines the \textit{cut-off line algorithm}, which we rigorously state as follows.

\begin{definition}[The cut-off line algorithm]\label{def:cutoffline}
	 Let $R=[n]\times [0,1], \,a_j,\,v_j, $ and $B_j$ as above and set $H_0 = 1$. At the beginning, $dn$ points $(v_j, B_j)$, $j\in [dn]$ are present on $R$. At $t$-th step of the algorithm, we perform the following.
	\begin{enumerate}
		\item Let $J_t$ be the collection of indices $j\in [dn]$ such that the point $(v_{j}, B_{j})$ is present in $R$.
		
		\item Let $j_t$ be the minimal element in $J_t$, and let $l_t = \arg \max \{B_j : j\in J_t\setminus \{j_t \} \}$. 
		
		\item Set $H_t = B_{l_t}$, remove the points $(v_{j_t}, B_{j_t})$, $(v_{l_t}, B_{l_t})$ from $R$, and place an edge between $v_{j_t}$ and $v_{l_t}$ by pairing $a_{j_t}$ and $a_{l_t}$.
	\end{enumerate}
	The horizontal line $y= H_t$ is called the \textit{cut-off line} at step $t$. 
\end{definition}

Suppose that we want to match all the half-edges attached at $W  \subset V$ of size $m$. To this end, we first choose the map $v$ in such a way that  $v_j \in W$ for all $j\leq \sum_{i\in W} D_i$, so that the first $\sum_{i\in W} D_i$ half-edges are all at $W$. Then, we perform the cut-off line algorithm until all the points $(v_j, B_j),\; j\leq \sum_{i\in W} D_i$  are removed from $R$. Suppose that after $T$-th step of the algorithm, all half-edges at $W$ are removed for the first time. 

Our first goal is to derive estimates on $H_T$.  For our purpose, it is enough to have some cheap estimates as in \cite{bnns}, nevertheless sharper ones can be found in \cite{k06}. We first make the following simple observation.

\begin{proposition}\label{prop:deterministic clone number bound}
	Under the above setting, we have that
	\begin{equation}\label{eq:deterministic clone number bound}
	|\{j\in [nd]: B_j \geq H_T \}| \leq 2\sum_{i\in W} D_i.
	\end{equation}
\end{proposition}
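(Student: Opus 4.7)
The plan is to combine two invariants of the cut-off line algorithm. First, I would observe that because the map $v$ has been arranged so that the half-edges of $W$ occupy indices $1,\ldots, m_W$ with $m_W := \sum_{i\in W} D_i$, the minimum-index element $j_t$ of $J_t = S_{t-1}$ (writing $S_t$ for the set of indices still present after step $t$, with $S_0 = [dn]$) must itself be a half-edge of $W$ for every $t \le T$: as long as any half-edge of $W$ remains, it has strictly smaller index than every half-edge outside $W$. Consequently each of the first $T$ steps deletes at least one half-edge at $W$, and since $W$ contains exactly $m_W$ half-edges, this forces $T \le m_W$.

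Next I would track the cut-off line. Because the $B_j$'s are i.i.d.\ uniform on $[0,1]$, they are a.s.\ distinct, and the choice $l_t = \arg\max\{B_j : j \in J_t \setminus \{j_t\}\}$ forces every index $j \in S_t$ to satisfy $B_j < H_t = B_{l_t}$. Applying this at $t = T$, any index $j$ that survives through step $T$ has $B_j < H_T$, so only indices removed by step $T$ can satisfy $B_j \ge H_T$:
\begin{equation*}
\{\, j \in [dn] : B_j \ge H_T \,\} \subseteq \{j_1, l_1, j_2, l_2, \ldots, j_T, l_T\}.
\end{equation*}
The right-hand side has cardinality at most $2T$, and combining with the bound $T \le m_W$ established above yields
\begin{equation*}
\bigl|\{\,j \in [dn] : B_j \ge H_T\,\}\bigr| \le 2T \le 2 m_W = 2\sum_{i \in W} D_i,
\end{equation*}
which is the desired inequality.

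I do not anticipate a substantive obstacle; the proof is essentially a bookkeeping check of the invariants of the algorithm. The only mild subtlety is that $B_{j_t}$ may well exceed $H_t$ for some $t$, since $l_t$ maximizes $B$ over $J_t \setminus \{j_t\}$ rather than all of $J_t$; but this does not inflate the count because $j_t$ is itself one of the at most $2T$ indices removed by step $T$ and is therefore already included in the displayed superset.
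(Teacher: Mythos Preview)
Your proof is correct and follows essentially the same approach as the paper: the paper observes that at most $2T$ points are removed by step $T$ and that $T \le \sum_{i\in W} D_i$, concluding that the removed points (which contain all $j$ with $B_j \ge H_T$) number at most $2\sum_{i\in W} D_i$. Your version is simply more explicit about why $j_t$ must lie in $W$ for every $t \le T$ and why surviving indices satisfy $B_j < H_T$, points the paper leaves to the reader.
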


\begin{proof}
	At each step of the algorithm, two points are removed from $R$. Note that $T\leq \sum_{i\in W} D_i$ by the definition of $T$. Therefore, at most $2\sum_{i\in W} D_i$ points are removed from $R$ after $T$-th step, which upper bound the l.h.s. of (\ref{eq:deterministic clone number bound}).
\end{proof}

From now on, let $m = \varepsilon n$.  Our  focus is on deriving a lower bound on $H_T$, and then we will  later see its connection with the size of $W^{*l}$. Before getting started, we remark two possible cases when $H_T$ can be low. 
\begin{enumerate}
	\item Too many half-edges are attached to $W$, i.e., many vertices in $W$ have very high degree. If so, we need a number of steps of the algorithm to match all of them, resulting in a low $H_T$.
	
	\item The number of half-edges with large $B_j$ is small.
\end{enumerate}

We begin with controlling the first possibility, bounding $\sum_{i\in W} D_i$.  For later purpose, we prove the following lemma that covers all choices of $\varepsilon n$-sized subsets of $[n]$. 

\begin{lemma}\label{lem:sum Di}
	 Let $\eta>0$ be an arbitrary number. Suppose that $\E_{D\sim \mu} D^k =M<\infty$ for some $k>1+\eta$ and let $\varepsilon = \varepsilon(n)\in (0,1)$ be  such that $\varepsilon n>0$ is an  integer. Then, we have
	\begin{equation}\label{eq:sum Di}
	\P \left(\textnormal{for all }W\subset V \textnormal{ with }|W|=\varepsilon n,~ \sum_{i\in W}  D_i \leq 3 M^{\frac{1}{k}} \varepsilon^{1- \frac{1+\eta}{k}} n\right) \leq \frac{2\varepsilon^\eta}{(\varepsilon n)^{k-1}},
	\end{equation} 
	for all large enough $n$.
\end{lemma}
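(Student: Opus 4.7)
The plan is to read the displayed inequality as bounding the probability of the bad event
\[
\mathcal B := \bigl\{\exists W\subset V,\ |W|=\varepsilon n,\ \textstyle\sum_{i\in W}D_i>s\bigr\}, \qquad s:=3M^{1/k}\varepsilon^{1-(1+\eta)/k}n,
\]
by $2\varepsilon^\eta/(\varepsilon n)^{k-1}$; equivalently, the ``for all $W$'' event in the display holds with probability at least $1-2\varepsilon^\eta/(\varepsilon n)^{k-1}$. The bad event coincides with $\{\sum_{i=1}^{\varepsilon n}D_{(i)}>s\}$, where $D_{(1)}\geq\cdots\geq D_{(n)}$ are the order statistics of the i.i.d.\ sample $D_1,\ldots,D_n\sim\mu$, since the extremal $W$ is the top-$\varepsilon n$ index set.

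First, a deterministic truncation. Choosing $T:=M^{1/k}\varepsilon^{-(1+\eta)/k}$ so that $\varepsilon n\cdot T=s/3$, one has
\[
\sum_{i=1}^{\varepsilon n}D_{(i)} \;\leq\; \varepsilon n\cdot T + \sum_{i=1}^n D_i\,\one_{\{D_i>T\}},
\]
because any top-$\varepsilon n$ index with $D_i\leq T$ contributes at most $T$, while those with $D_i>T$ sit inside the second sum. Hence $\mathcal B\subseteq\{X>2s/3\}$ for $X:=\sum_i D_i\,\one_{\{D_i>T\}}$. This $X$ is a sum of $n$ i.i.d.\ nonnegative random variables with $\E|D\one_{\{D>T\}}|^k\leq M$, variance $\sigma^2\leq T^{2-k}M$ (for $k\geq 2$), and mean $\E X\leq nM/T^{k-1}$; substituting $T$ gives $\E X=\Theta(\varepsilon^\eta\cdot s)$, much smaller than $2s/3$ for small $\varepsilon$.

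Applying the Fuk--Nagaev inequality with moment parameter $p=k$ to $X-\E X$ at level $x=2s/3-\E X\geq s/2$ yields
\[
\P(X>2s/3) \;\leq\; C_1\,\frac{nM}{s^k} + 2\exp\!\Bigl(-C_2\,\frac{s^2\,T^{k-2}}{nM}\Bigr),
\]
with explicit universal constants $C_1,C_2$. Substituting the definitions of $s$ and $T$, the first term evaluates to $C_1\cdot\varepsilon^\eta/(3^k(\varepsilon n)^{k-1})$---a constant multiple of the target form---while the exponent in the second term is $\Theta(n\varepsilon^{1-\eta})$, which tends to infinity under $\varepsilon n\to\infty$ with $\eta<1$, rendering that term super-polynomially smaller than the target and thus negligible.

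The main obstacle is shepherding the Fuk--Nagaev constant so that the overall prefactor stays under $2$. A clean remedy is to peel off the maximum first:
\[
\P(D_{(1)}>s/2)\leq nM\,(2/s)^k=(2/3)^k\,\varepsilon^\eta/(\varepsilon n)^{k-1}\leq\varepsilon^\eta/(\varepsilon n)^{k-1},
\]
and then apply Fuk--Nagaev (or directly Bernstein) to the residual bulk $\sum_i D_i\one_{\{T<D_i\leq s/2\}}$, whose summands are now bounded by $s/2$; that contribution is again of the form $C'\varepsilon^\eta/(\varepsilon n)^{k-1}$, and the extra room from the peel allows $1+C'\leq 2$ for all $k\geq 2$, closing the bound.
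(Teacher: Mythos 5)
Your overall skeleton (read the displayed inequality as bounding the complement, reduce to order statistics, truncate at a level $T\asymp M^{1/k}\varepsilon^{-(1+\eta)/k}$, split $\sum_{i\le\varepsilon n}D_{(i)}\le\varepsilon nT+\sum_iD_i\one_{\{D_i>T\}}$, then hit the truncated tail with a heavy-tail large-deviation bound) is essentially the paper's strategy; the paper also chooses a threshold $s$ (defined via the quantile $\P(D\ge s)\ge\varepsilon^{1-\eta/k}$ and then bounded by $M^{1/k}\varepsilon^{-1/k}$ via Markov), decomposes $\sum_{i\in W}D_i\le\sum_iD_i\one_{\{D_i\ge s+1\}}+s\varepsilon n$, controls $\E D\one_{\{D\ge s+1\}}$ by H\"older, and applies a one-jump Nagaev estimate. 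Your scale computations ($nM/s^k=\varepsilon^\eta/(3^k(\varepsilon n)^{k-1})$, $\E X\le\varepsilon^\eta s/3$, second-moment $\sigma^2\le MT^{2-k}$) are all correct, and you rightly identify the crux as controlling the constant in front of the polynomial term.

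The gap is in the proposed remedy. After peeling off $\{D_{(1)}>s/2\}$, you still need $\P\bigl(\sum_iD_i\one_{\{T<D_i\le s/2\}}>2s/3\bigr)\le C'\varepsilon^\eta/(\varepsilon n)^{k-1}$ with $C'$ small. But Bernstein (or Bennett) on this bounded residual cannot deliver a polynomially small bound here: the summands are bounded by $B=s/2$, the deviation level is $t\asymp s/3$, and the variance term $n\mathrm{Var}\le nMT^{2-k}=s^2/(9n\varepsilon^{1-\eta})=o(s^2)$ is dominated by the $Bt/3\asymp s^2$ term in the denominator of the exponent, so Bernstein yields $\exp(-\Theta(1))$---a constant---rather than anything decaying like $\varepsilon^\eta/(\varepsilon n)^{k-1}$. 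Switching the claim to ``Fuk--Nagaev on the residual'' does not help either: using the a.s.\ bound $s/2$ as the truncation level kills the polynomial term and reduces you exactly to the Bernstein estimate just discussed, while using the $k$-th moment instead reproduces the same polynomial term $C_1\,nM/(\tfrac{s}{3})^k=C_1\varepsilon^\eta/(\varepsilon n)^{k-1}$ you were trying to tame, and now you have to absorb both $1$ (from the peel) and $C_1$ under $2$, which is worse than before. So the ``peel then bound the bulk'' step does not close the constant; the claim that its contribution is of the form $C'\varepsilon^\eta/(\varepsilon n)^{k-1}$ with $1+C'\le2$ is unsupported and, at least via Bernstein, false.

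What the paper does instead avoids this entirely: it invokes the single-big-jump form of Nagaev's inequality, $\P\bigl(\sum_i(Z_i-\E Z_i)\ge bn\bigr)\le 2n\,\P(Z_1\ge bn)$ (citing \cite{n79,mn98}), applied to $Z_i=D_i\one_{\{D_i\ge s+1\}}$ with $b=M^{1/k}\varepsilon^{1-(1+\eta)/k}$, which comes with the constant $2$ built in. That is the key ingredient you would need to substitute for the ``peel + Bernstein'' step; alternatively, you would have to pin down a specific formulation of Fuk--Nagaev whose explicit constant, at the deviation level $x\ge 2s/3-\E X$, actually multiplies out to at most $2$ for all $k\ge 2$, which your write-up does not do. (One further minor point: both your argument and the paper's heavy-tail input implicitly need $k\ge 2$, which is stronger than the stated $k>1+\eta$; in all the paper's applications $k\ge6$, so this is harmless, but worth flagging.)
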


The proof of the lemma is based on applying H\"older's inequality and large deviation estimates. Even though the logic is fairly straight-forward, it requires some care on technical details. We suggest the reader to skip the proof if uninterested in details, which would not harm in  understanding the rest of the paper.

\begin{proof}[Proof of Lemma \ref{lem:sum Di}]

	First, note that it suffices to show the inequality for the largest $\varepsilon n$ values among $\{D_i\}_{i=1}^n$. Let $\Delta$ denote the $\varepsilon n$-th largest value among $\{D_i\}_{i=1}^n$, and let $s$ be the maximal integer that satisfies
	\begin{equation}\label{eq:D geq s}
	\P_{D\sim \mu}(D\geq s ) \geq \varepsilon^{1-\frac{\eta}{k}}.
	\end{equation}
	Then, we have that
	\begin{equation*}
	\P ( \Delta < s) \leq \P \left(\Bin(n, \varepsilon^{1-\frac{\eta}{k}}) \leq \varepsilon n\right) = \exp\left(-c_0 n\varepsilon^{1-\frac{\eta}{k}} \right),
	\end{equation*}
	for some absolute constant $c_0>0$, where we obtain the last equality from the fact $\varepsilon n \geq 1$. Let $W$ be the set of $\varepsilon n$ indices of the $\varepsilon n$ lagest values among $\{D_i\}_{i=1}^n$. Then, we claim that
	\begin{equation}\label{eq:sum Di leq ED}
	\P\left(\sum_{i\in W}D_i \leq 2M^{\frac{1}{k}} \varepsilon^{1-\frac{1+\eta}{k}} n + s\varepsilon n \right) \leq \frac{2\varepsilon^\eta}{(\varepsilon n)^{k-1}}.
	\end{equation}
	
	To establish the above inequality, note that given $\{\Delta \geq s \}$, we have
	\begin{equation}\label{eq:sum Di leq D}
	\sum_{i\in W} D_i \leq \sum_{i=1}^n D_i \one_{\{D_i \geq s+1\}} + s\varepsilon n.  
	\end{equation}
	Moreover, H\"{o}lder's inequality and (\ref{eq:D geq s}) implies that
	\begin{equation}\label{eq:D geq s 1}
	\E_{D\sim \mu}D\one_{\{D\geq s+1\} } \leq M^{\frac{1}{k}} \varepsilon^{(1-\frac{\eta}{k})(1-\frac{1}{k})} \leq M^{\frac{1}{k}} \varepsilon^{1-\frac{1+\eta}{k}}.
	\end{equation}
	
	If $Z_i, \; i\in [n]$ are i.i.d.$~$random variables whose c.d.f. satisfies $P(Z_i \geq t) \leq t^{-\alpha}$ for $\alpha>2$, then the large deviation inequalities for heavy-tailed random variables (see e.g., \cite{n79}, \cite{mn98}) tell us that
	\begin{equation*}
	\P\left( \sum_{i=1}^n(Z_i-\E Z_i) \geq bn \right) \leq 2n \P(Z_1 \geq bn),
	\end{equation*}
	for any constant $b>0$ and for all large enough $n$. Applying this to the random variables $D_i \one_{\{D_i \geq s+1 \}}$, we obtain by (\ref{eq:D geq s 1}) that	
	\begin{equation*}
	\P \left(\sum_{i=1}^n D_i \one_{\{D_i \geq s+1\}} \geq 2M^{\frac{1}{k}} \varepsilon^{1-\frac{1+\eta}{k}} n   \right) \leq 2n \P(D_1 \geq  M^{\frac{1}{k}} \varepsilon^{1-\frac{1+\eta}{k}} n ) 
	\leq \frac{2\varepsilon^\eta}{(\varepsilon n )^{k-1}}.
	\end{equation*}
	Thus, we establish (\ref{eq:sum Di leq ED}).

	Moreover, by Markov's inequality,
	\begin{equation}\label{eq:D geq s 2}
	\P_{D\sim \mu}( D\geq s) \leq \frac{M}{s^k}, \quad \textnormal{hence} \quad s\leq M^{\frac{1}{k}}\varepsilon^{-\frac{1}{k}+\frac{\eta}{k^2}}\leq M^{\frac{1}{k}}\varepsilon^{-\frac{1}{k}}.
	\end{equation}
	Combining (\ref{eq:sum Di leq ED}), (\ref{eq:D geq s 1}) and (\ref{eq:D geq s 2}), we deduce the conclusion.
\end{proof}

Next objective is to study $e(i,W)$, the number of edges between a vertex $i$ and the set $W$. Suppose that $y=H_T$ is the cut-off line when all half-edges from $W$ are matched for the first time. Then, observe that
\begin{equation}\label{eq:e(i,W) coupling}
e(i,W) = \textnormal{the number of points removed from }\{i\}\times [H_T , 1 ] \textnormal{ until }T\textnormal{-th step.}
\end{equation}

Let $h\in[0,1]$ and consider $Y_i \sim \Bin(D_i, 1-h),\;i\in[n]$, where $Y_i$'s are mutually independent. Since the height of the points in $R$ are set to be i.i.d.$\,$Unif$[0,1]$, we can see that given the event $H_T \geq h$, there is a coupling between $\{e(i,W) \}_{i=1}^n$ and $\{Y_i\}_{i=1}^n$ such that 
\begin{equation}\label{eq:e(i,W) leq Yi}
e(i,W) \leq Y_i \textnormal{ for all }i\in[n], \textnormal{ conditioned on }\sum_{i=1}^n Y_i \geq 
2\sum_{i'\in W} D_{i'},
\end{equation}
where the latter conditioning is needed to set the cut-off line above $h$, based on Proposition \ref{prop:deterministic clone number bound}. Define the graph property $\mathcal{B}_n$ (i.e., subsets of graphs of $n$ vertices) by
\begin{equation}\label{eq:Bn def}
\mathcal{B}_n := 	\left\{\textnormal{for all }W\subset V \textnormal{ with } |W|=\varepsilon n,~  \sum_{i\in W}  D_i \leq 3 M^{\frac{\delta}{20}} \varepsilon^{1- \frac{\delta}{16}} n \right\},
\end{equation}
the event given in (\ref{eq:sum Di}). Now we choose appropriate $h$, which can be done based on the following estimate on $H_T$.

\begin{lemma}\label{lem:HT bound}
	Let $\eta>0$ be arbitrary. Suppose that $\E_{D\sim \mu} D^k =M<\infty$ for some $k>1+\eta$, $\E_{D\sim\mu}D =d_0$ and let $\varepsilon\in (0,1)$ be a number such that $\varepsilon n$ is an integer. Let $W\subset V$ be any subset satisfying $|W|=\varepsilon n$, and let $H_T$ be the height of the cut-off line when all clones at $W$ are removed from the rectangle $R$ for the first time. Then,  we have that for all large enough $n$,
	\begin{equation}\label{eq:cutoff height bound}
	\P \left(\left. 1- H_T \geq \frac{7 M^{\frac{1}{k}}\varepsilon^{1-\frac{1+\eta}{k}} }{d_0} \;\right|\;\mathcal{B}_n \right) \leq \exp\left\{ -c \varepsilon^{1-\frac{1}{k}} n \right\},
	\end{equation}
	where $\mathcal{B}_n$ is given as (\ref{eq:Bn def}) and $c>0$ is an absolute constant.
\end{lemma}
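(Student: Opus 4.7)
The approach is to leverage a purely deterministic monotonicity property of the cut-off line algorithm and then invoke binomial concentration for the heights $\{B_j\}$.

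First, I would establish the monotonicity of the cut-off heights. At each step $t$ the algorithm removes $a_{l_t}$, the currently topmost unmatched half-edge, together with $a_{j_t}$, so the running maximum over the remaining half-edges can only go down; hence the sequence $(H_t)$ is non-increasing. A direct consequence is that on $\{H_T < h\}$, every half-edge present in $J_T \setminus \{j_T\}$ must satisfy $B_j < h$, so every half-edge with $B_j \geq h$ -- except possibly $a_{j_T}$ itself -- has already been matched within the first $T-1$ steps. Combined with Proposition \ref{prop:deterministic clone number bound} (which gives $2T \leq 2\sum_{i\in W} D_i$), this yields the deterministic inclusion
\begin{equation*}
\{H_T < h\} \;\subseteq\; \Bigl\{\, \bigl|\{j \in [dn] : B_j \geq h\}\bigr| \;\leq\; 2\textstyle\sum_{i \in W} D_i \,\Bigr\}.
\end{equation*}

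Next, I would take $h = 1 - 7 M^{1/k}\varepsilon^{1-(1+\eta)/k}/d_0$, matching the statement's threshold. Because $\{B_j\}$ are i.i.d.\ Unif$[0,1]$ independently of $\{D_i\}$, the variable $|\{j : B_j \geq h\}|$ is $\Bin(dn, 1-h)$ conditionally on the degree sequence. On $\mathcal{B}_n$ the right-hand side of the inclusion is at most $6 M^{1/k}\varepsilon^{1-(1+\eta)/k} n$, while on the auxiliary event $\mathcal{A} := \{d \geq 0.99\, d_0\}$ the binomial's mean is at least $6.9\, M^{1/k}\varepsilon^{1-(1+\eta)/k}\, n$, a constant factor above the threshold. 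A standard multiplicative Chernoff bound applied conditionally on $\{D_i\}$ therefore yields an $\exp(-c\,\varepsilon^{1-(1+\eta)/k} n)$ tail on the inclusion's right-hand side, which is $\leq \exp(-c\,\varepsilon^{1-1/k} n)$ since $\varepsilon \in (0,1)$ forces $\varepsilon^{1-(1+\eta)/k} \geq \varepsilon^{1-1/k}$.

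The complementary auxiliary event $\mathcal{A}^c$ has probability at most $e^{-c n}$ by a standard truncation-plus-Hoeffding argument: pick $L$ large enough that $\E[D\one_{\{D > L\}}] \leq d_0/100$ and apply Hoeffding to the bounded i.i.d.\ variables $D_i \wedge L$; no additional moment on $\mu$ is needed. Since $\P(\mathcal{B}_n) \to 1$ by Lemma \ref{lem:sum Di}, the decomposition
\begin{equation*}
\P(H_T < h \mid \mathcal{B}_n) \;\leq\; \P(H_T < h,\, \mathcal{A} \mid \mathcal{B}_n) + \P(\mathcal{A}^c)/\P(\mathcal{B}_n)
\end{equation*}
combines the two contributions into the claimed bound. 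The main technical subtlety I anticipate is bookkeeping the constants cleanly through this double conditioning on $\mathcal{B}_n$ and $\mathcal{A}$; the genuine content lies in the deterministic inclusion of the first paragraph, which converts a question about the cut-off line algorithm into a one-dimensional binomial concentration problem.
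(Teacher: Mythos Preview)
Your proposal is correct and follows essentially the same route as the paper: the deterministic inclusion $\{H_T < h\} \subseteq \{|\{j:B_j\geq h\}|\leq 2\sum_{i\in W}D_i\}$ (which the paper cites as Proposition~\ref{prop:deterministic clone number bound}), the identification of $|\{j:B_j\geq h\}|$ as $\Bin(dn,1-h)$ given the degrees, the bound on $\sum_{i\in W}D_i$ from $\mathcal{B}_n$, the auxiliary event $\{d\geq c\,d_0\}$, and a binomial lower-tail bound. The only cosmetic differences are that the paper quotes $d\geq 0.9\,d_0$ whp without spelling out the truncation-plus-Hoeffding step, and writes the final exponent as $\varepsilon^{1-1/k}$ directly rather than passing through $\varepsilon^{1-(1+\eta)/k}$ first.
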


\begin{proof}
	Recall that $d=\frac{1}{n} \sum_{i=1}^n D_i$. We first note that if $H_T \leq h$, then 
	\begin{equation*}
	|\{j \in[dn] : B_j \geq h \}| \leq 2\sum_{i\in W} D_i,
	\end{equation*}
	by Proposition \ref{prop:deterministic clone number bound}. Moreover, observe that given $\{D_i\}_{i=1}^n$, 
	\begin{equation*}
	|\{j \in [dn] : B_j \geq h \}| \overset{\textnormal{d}}{=} \Bin(dn, 1-h).
	\end{equation*}
	We also know that \textsf{whp} over the choice of $\{D_i\}_{i=1}^n$, $\sum_{i\in W} D_i \leq 3M^{\frac{1}{k}}\varepsilon^{1-\frac{1+\eta}{k}} n$ for all $W$. Further, we have $d\geq 0.9 d_0$ \textsf{whp}. Therefore, we obtain that the l.h.s. of (\ref{eq:cutoff height bound}) is at most
	\begin{equation}\label{eq:HT by binom estim}
	\P \left(\Bin\left(0.9d_0n, \frac{7 M^{\frac{1}{k}}\varepsilon^{1-\frac{1+\eta}{k}} }{d_0} \right)    \leq 6M^{\frac{1}{k}}\varepsilon^{1-\frac{1+\eta}{k}} n \right).
	\end{equation}
	To control this term, we rely on the following large deviation estimate for binomials: for all $\gamma>0$, there exists $c_\gamma>0$ such that
	\begin{equation}\label{eq:large dev bin 1}
	\P\left(\Bin(n,p) \leq (1-\gamma)np \right) \leq \exp(-c_\gamma np).
	\end{equation}
	(See, e.g., Chapter 21 of \cite{fk16}.) Applying this to (\ref{eq:HT by binom estim}), we see that  it is bounded by  $\exp\left\{ -c \varepsilon^{1-\frac{1}{k}} n \right\}$,	
	for some absolute constant $c>0$, and hence we obtain the conclusion.
\end{proof}

Therefore, the lemma tells us that we can choose $h$ to be
\begin{equation*}
h= 1-\frac{7 M^{\frac{1}{k}}\varepsilon^{1-\frac{1+\eta}{k}} }{d_0},
\end{equation*}
so that   conditioned on $\mathcal{B}_n$,
\begin{equation}\label{eq:sum Yi bound}
\P \left(\left.\sum_{i=1}^n Y_i \geq 6M^{\frac{1}{k}}{\varepsilon^{1-\frac{1+\eta}{k }}}n \;\right|\;\mathcal{B}_n \right) \geq 1-\exp\left\{-c \varepsilon^{1-\frac{1}{k}} n \right\}.
\end{equation}

\subsection{Proof of Proposition \ref{prop:2gen}}\label{subsec:2gen}

Let $\delta\in (0,1)$ be given and suppose that $\mu$ satisfies $\E_{D\sim \mu} D^{k_0} =M<\infty$ with $k_0 = \frac{20}{\delta}$, as in the statement of Proposition \ref{prop:2gen}. Let the degree sequence $\{D_i\}_{i=1}^n$ be given by i.i.d. samples of $\mu$ and let $\varepsilon=\varepsilon(n)\in(0,1)$ be a small parameter which will be specified later.

Before considering the matching between half-edges, we first exclude vertices with too large $D_i$ from our consideration by a simple concentration argument.

\begin{lemma}\label{lem:large Di}
	Suppose that $\mu$ satisfies $\E_{D\sim \mu} D^{k_0} =M<\infty$ with $k_0 = \frac{20}{\delta}$ and $\varepsilon=\varepsilon(n)\in(0,1)$ obeys $\lim_{n\rightarrow \infty}\varepsilon n=\infty$. Then, there exists an absolute constant $c>0$ such that for all large enough $n$ and $D_i \sim \textnormal{i.i.d.}~\mu$,
	\begin{equation}\label{eq:number of large Di}
	\P_\mu \left(\left|\left\{i\in[n]: D_i \geq \left(\frac{3M}{\delta\varepsilon} \right)^{\delta/20} \right\}\right|  \geq \frac{1}{2}\delta\varepsilon n\right) \leq \exp \{-c\delta\varepsilon n \} .
	\end{equation}
\end{lemma}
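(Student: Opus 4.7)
The plan is a clean two-step argument: first a one-dimensional tail bound on $D$ via the moment hypothesis, then a Chernoff-type upper-tail estimate for the number of indices where $D_i$ is large.

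First, I would set $t := \left(\frac{3M}{\delta\varepsilon}\right)^{\delta/20}$, noting that the exponent $\delta/20$ is calibrated exactly so that $t^{k_0}=3M/(\delta\varepsilon)$ since $k_0 = 20/\delta$. Markov's inequality applied to $D^{k_0}$ then yields
\[
\P_\mu(D\geq t) \;\leq\; \frac{\E_\mu D^{k_0}}{t^{k_0}} \;=\; \frac{\delta\varepsilon}{3}.
\]

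Second, let $N := |\{i\in[n]: D_i\geq t\}|$. Since the $D_i$ are i.i.d.\ samples from $\mu$, $N$ is stochastically dominated by $\Bin(n, \delta\varepsilon/3)$, whose mean is at most $\delta\varepsilon n/3$. The event $N\geq \delta\varepsilon n/2$ is thus a deviation by a factor $3/2$ above this mean. A standard Chernoff upper-tail bound for the binomial, the natural companion of the lower-tail estimate (\ref{eq:large dev bin 1}) already invoked in the proof of Lemma \ref{lem:HT bound}, then gives
\[
\P\!\left(\Bin\!\left(n,\tfrac{\delta\varepsilon}{3}\right) \geq \tfrac{3}{2}\cdot\tfrac{\delta\varepsilon n}{3}\right) \;\leq\; \exp(-c\,\delta\varepsilon n)
\]
for some absolute constant $c>0$. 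Since the hypothesis $\varepsilon n \to \infty$ (with $\delta$ fixed) forces $\delta\varepsilon n \to \infty$, the bound is nontrivial for all large enough $n$, and combined with the stochastic domination above yields the claim.

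I do not expect any real obstacle: the whole content is in the calibration of the threshold exponent $\delta/20$, which is chosen precisely so that the per-vertex probability $\delta\varepsilon/3$ leaves a constant-factor slack below the target count rate $\delta\varepsilon/2$ — exactly what a Chernoff argument needs to produce an $\exp(-\Theta(\delta\varepsilon n))$ tail. The only mildly delicate point is that $\delta\varepsilon n$ must diverge in order for the Chernoff bound to kick in, which is guaranteed by the hypothesis on $\varepsilon n$.
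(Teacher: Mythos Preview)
Your proof is correct and matches the paper's argument essentially line for line: set the threshold $M_0=(3M/\delta\varepsilon)^{\delta/20}$, apply Markov's inequality to $D^{k_0}$ to get $\P_\mu(D\geq M_0)\leq \delta\varepsilon/3$, and finish with a Chernoff upper-tail bound on the resulting binomial count. The paper is slightly terser and cites the lower-tail form~(\ref{eq:large dev bin 1}) where, as you correctly note, its upper-tail companion is what is actually used.
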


\begin{proof}
	Let $M_0 = (3M/\delta \varepsilon)^{\delta/20}$. Then, note that
	\begin{equation*}
	|\{i\in[n]: D_i \geq M_0 \}| \sim \Bin \left(n, \P_{\mu}(D\geq M_0)\right). 
	\end{equation*}
	We can easily see that 
	$$\P_\mu (D\geq M_0) \leq \frac{M}{M_0^{20/\delta}} = \frac{1}{3} \delta \varepsilon,$$
	and hence (\ref{eq:number of large Di}) follows from (\ref{eq:large dev bin 1}).
\end{proof}

Let  $\mathcal{A}_n$ be the graph property  defined as
\begin{equation}\label{eq: An Bn def}
\begin{split}
\mathcal{A}_n := \left\{\left|\left\{i\in[n]: D_i \geq \left(\frac{3M}{\delta\varepsilon} \right)^{\delta/20} \right\}\right|  < \frac{1}{2}\delta\varepsilon n\right\},
\end{split}
\end{equation}
where $\{D_i\}_{i=1}^n$ is the degree sequence of a graph, and let $\mathcal{B}_n$ be as (\ref{eq:Bn def}). Then for $G_n \sim \mathcal{G}(n,\mu)$, we obtain the following by applying Lemmas \ref{lem:sum Di} and \ref{lem:large Di} with $\eta = \frac{1}{4}$.
\begin{equation}\label{eq:An cap Bn}
\P \left(G_n \in \mathcal{A}_n \cap \mathcal{B}_n \right) \geq 1- \frac{2\varepsilon^{1/4}}{(\varepsilon n)^{k_0 -1}} -\exp(-c\delta \varepsilon n ),
\end{equation}
for some absolute constants $c, C>0$. Also, note that the event $\mathcal{A}_n \cap \mathcal{B}_n$ is measureable with respect to $\{D_i\}_{i=1}^n$, i.e., independent of the matching between half-edges.

Let $W\in V=[n]$ be a fixed subset of vertices with size $|W|=\varepsilon n$. As discussed in the previous subsection, suppose that we generate $G_n$, by first pairing the half-edges at $W$ following the cut-off line algorithm. Let $H_T$ be the cut-off line when all half-edges at $W$ are paired for the first time. 

Let $d_0 = \E_{D\sim\mu}D$.  By applying Lemma \ref{lem:HT bound} with $\eta=\frac{1}{4}$, we have
\begin{equation}\label{eq:cutoff height bound for 2}
\P \left(1- H_T \geq \frac{7 M^{\frac{\delta}{20}}\varepsilon^{1-\frac{\delta}{16} }}{d_0} \right) \leq \exp\left\{ -c \varepsilon^{1-\frac{\delta}{20}} n \right\},
\end{equation}
for some absolute constant $c>0$. Let 
$$h = 1- \frac{7 M^{\frac{\delta}{20}}\varepsilon^{1-\frac{\delta}{16} }}{d_0},$$
and for each $i\in[n]$ define
$Y_i \sim \Bin (D_i , 1-h)$ to be mutually independent given $\{D_i\}_{i=1}^n$. Then,
\begin{equation}\label{eq:W2 estim split}
\begin{split}
&\P\left(\left.|W^{*2}| \geq (1+\delta)\varepsilon n \,\right|\,\mathcal{A}_n \cap \mathcal{B}_n \right)\\
&~~~\leq
\P \left(\left. \sum_{i=1}^n Y_i \leq 6M^{\frac{\delta}{20}} \varepsilon^{1-\frac{\delta}{16} }n \,\right|\, \mathcal{A}_n \cap \mathcal{B}_n  \right)\\
&~~~~~+\P\left( \left| \left\{ i\in[n] : Y_i\geq 2 \right\} \right| \geq(1+\delta) \varepsilon n\,   \left|\, \sum_{i=1}^n Y_i \geq 6M^{\frac{\delta}{20}} \varepsilon^{1-\frac{\delta}{16} }n,~\mathcal{A}_n \cap \mathcal{B}_n \right.  \right).
\end{split}
\end{equation}
The first term in the r.h.s. can be handled by (\ref{eq:sum Yi bound}). For the second, we carry out by bounding $Y_i$ by  $Z_i \sim (M_0, 1-h)$ with  $M_0 = (3M/\delta \varepsilon)^{\delta/20}$, where $Z_i$ being independent of each other. Namely,
\begin{equation}\label{eq:W2 estim 2}
\begin{split}
&\P\left( \left| \left\{ i\in[n] : Y_i\geq 2 \right\} \right| \geq(1+\delta) \varepsilon n\,   \left|\, \sum_{i=1}^n Y_i \geq 6M^{\frac{\delta}{20}} \varepsilon^{1-\frac{\delta}{16} }n,~\mathcal{A}_n \cap \mathcal{B}_n \right.  \right) \\
&~~~~~~~~\leq 
(1+o(1)) \P (\left| \left\{ i\in[n] : Y_i\geq 2 \right\} \right| \geq(1+\delta) \varepsilon n \,|\, \mathcal{A}_n)\\
&~~~~~~~~\leq
(1+o(1)) \P \left(\left| \left\{ i\in[n] : Z_i\geq 2 \right\} \right| \geq\left(1+\frac{\delta}{2}\right) \varepsilon n \right),
\end{split}
\end{equation}
 where the last inequality follows from the definition of $\mathcal{A}_n$ and Lemma \ref{lem:large Di}. We can then bound
\begin{equation*}
\P (Z_i \geq 2) \leq M_0^2 (1-h)^2,
\end{equation*}
and hence obtain that $ \left| \left\{ i\in[n] : Z_i\geq 2 \right\} \right| \leq_{\textsc{st}} \Bin (n, M_0^2 (1-h)^2)$.
Note that the binomials satisfy the following large deviation estimate (see, e.g., Chapter 21 of \cite{fk16})
\begin{equation}\label{eq:bin large dev}
\P(\Bin(n,p) \geq \mu n p) \leq \exp(-\mu n p \log(\mu/e)).
\end{equation}
We thus obtain
\begin{equation}\label{eq:W2 estim 3}
\textnormal{r.h.s. of (\ref{eq:W2 estim 2}) }\leq
(1+o(1))\exp \left\{-\left(1+\frac{\delta}{2} \right) \log\left( \varepsilon^{-1+\frac{\delta}{3}} \right)  \right\},
\end{equation}
if $\varepsilon$ satisfies
\begin{equation}\label{eq:varep def}
\varepsilon^{\frac{\delta}{12}} \leq \frac{d_0^2 \delta^{\frac{\delta}{10}}}{150e M^{\frac{\delta}{5}}}.
\end{equation}

Therefore, combining (\ref{eq:sum Yi bound}), (\ref{eq:W2 estim split}) and (\ref{eq:W2 estim 3}) implies that
\begin{equation*}
\P\left(\left.|W^{*2}| \geq (1+\delta)\varepsilon n \,\right|\,\mathcal{A}_n \cap \mathcal{B}_n \right)
\leq
(1+o(1))\exp \left\{-\left(1+\frac{\delta}{6} \right) \varepsilon n \log\left(\frac{1}{\varepsilon} \right)  \right\}.
\end{equation*}
Then, we can apply a union bound to obtain that
\begin{equation}\label{eq:W2 union bd}
\begin{split}
&\P\left(\exists \, W\subset V \textnormal{ such that }|W|=\varepsilon n \textnormal{ and } \left.|W^{*2}| \geq (1+\delta)\varepsilon n \,\right|\,\mathcal{A}_n \cap \mathcal{B}_n \right)\\
&~~~~~~\leq
(1+o(1)) {n \choose \varepsilon n } \exp \left\{-\left(1+\frac{\delta}{6} \right) \varepsilon n \log\left(\frac{1}{\varepsilon} \right)  \right\}\\
&~~~~~~\leq
(1+o(1)) \exp \left\{-\frac{\delta}{6}  \varepsilon n \log\left(\frac{1}{\varepsilon} \right)  \right\}.
\end{split}
\end{equation}
Finally, we deduce the conclusion by simply observing
\begin{equation*}
	\P \left(G_n \in \mathcal{E}^{*2} ( \varepsilon n, (1+\delta) \varepsilon n) \right)
	\geq \P(\mathcal{A}_n \cap \mathcal{B}_n )
	\, \P \left(\left.G_n \in \mathcal{E}^{*2} ( \varepsilon n, (1+\delta) \varepsilon n) \,\right|\,\mathcal{A}_n \cap \mathcal{B}_n \right),
\end{equation*}
and then using (\ref{eq:An cap Bn}), for any $\varepsilon$ satisfying (\ref{eq:varep def}). \qed

\subsection{Proof of Proposition \ref{prop:3gen}}\label{subsec:3gen}

Let $\theta \geq 2$ be an integer, and suppose that $D\sim \mu$ satisfies $\P(D\geq \theta+2)=1$ and $\E D^6= M <\infty$. Also, let $\varepsilon>0$ be such that $\varepsilon n$ is an integer. If we generate the degree sequence of $G_n$ by i.i.d. samples of $\mu$. As  
(\ref{eq:Bn def}), define $\mathcal{B}_n$ to be an event that
\begin{equation*}
\mathcal{B}_n := 	\left\{\textnormal{for all }W\subset V \textnormal{ with } |W|=\varepsilon n,~  \sum_{i\in W}  D_i \leq 3 M^{\frac{1}{k_0}} \varepsilon^{1- \frac{1+\eta}{k_0}} n \right\},
\end{equation*}
where $k_0, \eta>0$ can be any numbers such that $1+\eta < k_0 \leq 6$. Then, Lemma \ref{lem:sum Di}  implies that for some absolute constant $C>0$,
\begin{equation*}
\P \left(G_n \in \mathcal{B}_n \right) \geq 1- \frac{2\varepsilon^\eta}{(\varepsilon n)^{k_0-1}}.
\end{equation*}

Now, let $W\subset V$ be a fixed subset of vertices with size $|W|=\varepsilon n$, and let $U = V\setminus W$. We will study the set $V\setminus U^{*\theta}$, which can be written as
\begin{equation*}
V\setminus U^{*\theta} = \{i\in[n]: e(i,W)\geq D_i -\theta+1    \},
\end{equation*}
where $e(i,W)$ denotes the number of edges between vertex $i$ and the set $W$. As in the previous subsection, consider the independent binomials $Y_i \sim \Bin(D_i,1-h)$, where 
\begin{equation}\label{eq:h def}
h=1-\frac{7M^{\frac{1}{k_0}}\varepsilon^{1-\frac{1+\eta}{k_0} }}{d_0},
\end{equation}
with $d_0=\E_{D\sim \mu}D$. Suppose that we generate $G_n$ starting from matching the half-edges at $W$, and let $H_T$ is the height of the cut-off line when all half-edges at $W$ are paired for the first time. Then, by (\ref{eq:e(i,W) leq Yi}), similar argument as (\ref{eq:W2 estim split}) tells us that
\begin{equation}\label{eq:W3 estim split}
\begin{split}
&\P \left(\left.\left|V\setminus U^{*\theta} \right| \geq 0.9\varepsilon n\,\right|\, \mathcal{B}_n \right)\\
&~~~\leq
\P \left(\left. \sum_{i=1}^n Y_i \leq 6M^{\frac{1}{k_0}} \varepsilon^{1-\frac{1+\eta}{k_0} }n \,\right|\,  \mathcal{B}_n  \right)\\
&~~~~~+\P\left( \left| \left\{ i\in[n] : Y_i\geq D_i-\theta+1 \right\} \right| \geq 0.9 \varepsilon n\,   \left|\, \sum_{i=1}^n Y_i \geq 6M^{\frac{1}{k_0}} \varepsilon^{1-\frac{1+\eta}{k_0} }n,~ \mathcal{B}_n \right.  \right).
\end{split}
\end{equation}
Observe that
\begin{equation*}
\P (\Bin(d, 1-h) \geq d-\theta+1 ) \leq {d \choose \theta -1}  (1-h)^{d-\theta+1} =: \mathfrak{p}_{\theta, h}(d).
\end{equation*}
Further, we can find the maximum of $\mathfrak{p}_{\theta, h}(d)$ in the regime $d \geq \theta +2$ by noting that
\begin{equation*}
\frac{\mathfrak{p}_{\theta,h}(d+1) }{\mathfrak{p}_{\theta,h}(d) }
=\frac{d+1}{d-\theta+2}(1-h) \leq \frac{\theta+3}{4}(1-h).
\end{equation*}
Hence, if $1-h \leq \frac{4}{\theta +3}$, then $\mathfrak{p}_{\theta, h}(d)$ is maximal when $d=\theta+2$. Therefore, for such $h$, we have
\begin{equation}\label{eq:Yi geq Di -theta+1}
\P(Y_i \geq D_i -\theta+1  ) \leq {\theta+2 \choose \theta-1}(1-h)^3 \leq \{(\theta+2)(1-h)\}^3.
\end{equation}

Thus, having (\ref{eq:bin large dev}) and (\ref{eq:h def}) in mind, we see that  the last term in (\ref{eq:W3 estim split}) is bounded by
\begin{equation*}
\begin{split}
(1+o(1)) &\P\left( \left| \left\{ i\in[n] : Y_i\geq D_i-\theta+1 \right\} \right| \geq 0.9 \varepsilon n   \right)\\
&\leq
(1+o(1)) \P \left(\Bin(n, \{(\theta+2)(1-h)\}^3) \geq 0.9\varepsilon n \right)\\
&\leq
\exp\left\{-0.9 \varepsilon n \log \left( \varepsilon^{-2+\frac{4+3\eta }{k_0}} \right) \right\},
\end{split} 
\end{equation*}
if $\varepsilon$ satisfies 
\begin{equation}\label{eq:epsilon bd 2}
\varepsilon \leq e^{-k_0} \left(\frac{d_0}{ 7(\theta+2)M} \right)^{3k_0}.
\end{equation}
Note that we used (\ref{eq:bin large dev}) for the last inequality. Therefore, if we take $k_0 = 6$, $\eta = \frac{1}{5}$, we get
\begin{equation}\label{eq:epsilon bd 3}
\begin{split}
\P \left(\left.\left|V\setminus U^{*\theta} \right| \geq 0.9\varepsilon n\,\right|\, \mathcal{B}_n \right)
&\leq
\exp\left\{-c \varepsilon^{1-\frac{1}{6}}n \right\} + \exp \left\{-1.1\varepsilon n \log\left(\frac{1}{\varepsilon} \right) \right\}\\
&\leq 
2\exp \left\{-1.1\varepsilon n \log\left(\frac{1}{\varepsilon} \right) \right\},
\end{split}
\end{equation}
where $c>0$ is an absolute constant from (\ref{eq:sum Yi bound}), and the last inequality holds true if we take $\varepsilon$ small so that $c\varepsilon^{1/6} \geq \log(1/\varepsilon)$. Therefore, the conclusion (\ref{eq:prop3}) follows after we take a union bound as in (\ref{eq:W2 union bd}), if we set $\varepsilon_1$ to be small so that it satisfies
\begin{equation}\label{eq:epsilon1 def}
\varepsilon_1^{1-\frac{1+\eta}{6}} \leq \frac{4d_0}{7(\theta+2) M^{\frac{1}{6}} }, ~~~~
	\varepsilon_1 \leq e^{-k_0} \left(\frac{d_0}{ 7(\theta+2)M} \right)^{3k_0},~~~ 
	\textnormal{and} ~~
	c\varepsilon_1^{\frac{1}{6}} \geq \log(1/\varepsilon_1),
\end{equation}
where each condition follows from (\ref{eq:Yi geq Di -theta+1}), (\ref{eq:epsilon bd 2}) and (\ref{eq:epsilon bd 3}), respectively.
\qed

\section{Discontinuous phase transitions on random graphs}\label{sec:thm}

In this section, we establish Theorems \ref{thm:thm1} and \ref{thm:thm4}. Theorem \ref{thm:thm1}-1 follows by a simple argument, similarly  as Sections 2 \cite{cd13}. Theorem \ref{thm:thm2} requires additional care as mentioned in Section \ref{subsec:main techniques}, where we discuss a biased random walk argument to control the expansion of infections when their size is small. In order to settle Theorem \ref{thm:thm4}, we introduce some background on the $k$-cores of random graphs in Section \ref{subsec:thm4}.

\subsection{Proof of Theorem \ref{thm:thm1}-1}\label{subsec:thm1-1}

Let $\delta_1 = 0.1 $ and $\varepsilon_1 >0$ be the constant as in Proposition \ref{prop:3gen}. Set
\begin{equation*}
p_1 : = \frac{1-\varepsilon_1}{1-(1-\frac{\delta_1}{2}) \varepsilon_1}.
\end{equation*}
Suppose that the threshold-$\theta$ contact process $(X_t)$ with $p\geq p_1$  satisfies $|X_t|\geq (1- \varepsilon_1) n$ at some time $t$. Recalling the definiton of $\mathcal{F}^{*l}(m_1, m_2)$ in (\ref{eq:E*def}), we have  
\begin{equation*}
G_n \in \mathcal{F}^{*\theta}(\lceil(1-\varepsilon_1) n\rceil, \lceil(1-(1-\delta_1)\varepsilon_1) n\rceil) \quad \textnormal{implies} \quad |X_{t+1}| ~\geq_{\textsc{st}}~ \Bin \left(\lceil (1-(1-\delta_1)\varepsilon_1 ) n\rceil, \, p \right),
\end{equation*}

\noindent where $\geq_{\textsc{st}}$ denotes stochastic domination between random variables. Thus, on the event $\mathcal{F}^{*\theta}((1-\varepsilon_1) n, (1-(1-\delta_1)\varepsilon_1) n)$, a standard large deviation estimate  for binomials (e.g., \cite{fk16}, Section 22.4) implies that
\begin{equation*}
\P_{\textsc{tcp}} \left(|X_{t+1}| \leq (1-\varepsilon_1)n \, | \, |X_t| \geq (1-\varepsilon_1)n \right) \leq \exp \left(-\frac{\delta_1^2 \varepsilon_1^2}{12} n \right),
\end{equation*}
where $\P_{\textsc{tcp}}$ denotes the probability coming from the randomness of $(X_t)$. Set $\tau := \exp\left(\frac{\delta_1^2 \varepsilon_1^2}{13}n\right)$, and let $X_0 \equiv \one$. Then, the probability that $ |X_t|\geq (1-\varepsilon_1)n$ fails for some $t\in[0,\tau]$ is exponentially small in $n$. Finally, Proposition \ref{prop:3gen} tells us that $G_n \in \mathcal{F}^{*\theta}((1-\varepsilon_1) n, (1-(1-\delta_1)\varepsilon_1) n)$  \textsf{whp}. \qed

\subsection{Proof of Theorem \ref{thm:thm1}-2}\label{subsec:thm1-2}
As discussed in Section \ref{subsec:main techniques}, the argument in the previous section or in Section 3 of \cite{cd13} does not work the same for Theorem \ref{thm:thm1}-2. Instead, the proof will proceed in two steps as follows.
\begin{enumerate}
	\item Starting from $\varepsilon_2 n$ infections where $\varepsilon_2>0$ is a small fixed constant, we first reduce the size of infection into $\Delta \log\log n $ in time $C_1 \log n$, where $\Delta>0$ is another small fixed constant. 
	
	\item We eliminate the remaining infections \textit{by luck}, meaning that we rely on the event where all the infections are recovered in a single time step. If this fails and the infection expands into a bigger size than $\Delta \log\log n$, then we show that it quickly returns to size $\Delta \log\log n$ after a short time, where we try  another update to kill every infection at once. 
\end{enumerate}

We prove the short survival result for the threshold-$2$ contact process, which clearly implies the result for general $\theta \geq 2$. Let $p \in (0,1)$ be the given probability parameter of the threshold-$\theta$ contact process $(X_t)$,  and set $$\delta_2:= \frac{1}{3}(1-p), \qquad \Delta = (3\log (\frac{1}{1-p}))^{-1}.$$ Further, let $\varepsilon_2 = \varepsilon_2(\delta_2)$ as in Proposition \ref{prop:2}. Assume that  $G_n \sim \mathcal{G}(n,\mu)$ satisfies $\mathcal{E}^{*2}(m, (1+\delta_2)m)$ for all $m\in [\Delta \log\log n ,\,\varepsilon_2 n]$, which happens with probability
\begin{equation}\label{eq:prop Cn}
\begin{split}
\P\left(G_n \in \bigcap_{m=\Delta \log\log n}^{\varepsilon_2 n} \mathcal{E}^{*2} (m, (1+\delta_2) m ) \right)
& \geq 1- \sum_{m=\Delta\log\log n}^{\varepsilon_2 n} \left\{ \frac{C}{m^{k_2-1}}\left(\frac{m}{n} \right)^{\frac{1}{4}} +e^{-c\delta m} \right\}\\
&\geq
1- 2\left(\log n \right)^{-c\delta \Delta} = 1-o(1),
\end{split} 
\end{equation}
where we set $k_2 = 20/\delta_2$.

\subsubsection{Reducing the infection to a small size}\label{subsec:short surv step1}

Let us define the event $\mathcal{C}_n$ by
\begin{equation*}
\mathcal{C}_n := \bigcap_{m=\Delta \log\log n}^{\varepsilon_2 n} \mathcal{E}^{*2} (m, (1+\delta_2) m ),
\end{equation*}
the event in the l.h.s. of (\ref{eq:prop Cn}).

To begin with, we show that $(X_t)$ started from $|X_0|\leq \varepsilon_2 n$ reaches size $\Delta \log\log n$ in time $O(\log n)$ \textsf{whp}. Therefore, let us assume that $|X_0| \geq \Delta\log \log n$, otherwise there is nothing to prove.

On $G_n \in \mathcal{C}_n$, if $X_t$ at some time $t$ is  $\Delta \log\log n \leq |X_t| \leq \varepsilon_2 n$, we have
$$|X_{t+1}| \leq_{\textsc{st}} \Bin( (1+\delta_2)|X_t|,\, p).$$
Noting that $p(1+\delta_2)\leq 1-2\delta_2$, we obtain $\E_{\textsc{tcp}}[|X_{t+1}| \, | \, |X_t|]\leq (1-2\delta_2) |X_t| $, and
\begin{equation}\label{eq:short surv one step}
\P_{\textsc{tcp}} \left(|X_{t+1} |\geq (1-\delta_2)m \, |\, |X_t|=m,\, \mathcal{C}_n \right) \leq \exp \left(- \frac{\delta_2^2 m}{3} \right),
\end{equation}
 Let $\tau_2'$ be the first time when $|X_t| \leq \sqrt{n}$.
 Then, there exists some constant $C_2'$ such that
\begin{equation}\label{eq:C2prime}
\P_{\textsc{tcp}} \left(\left. \tau_2' \geq C_2' \log n \,\right|\, |X_0| \leq \varepsilon_2 n, \, \mathcal{C}_n \right) \leq \exp \left(-\frac{\delta_2^2 n^{1/2}}{4} \right),
\end{equation}
where the r.h.s. indicates the probability that the events $\{|X_{t+1}| \leq (1-\delta_2) |X_t| \}$ hold for all $t$ until the infection size reaches $\sqrt{n}$.

To make $(X_t)$ reach $\Delta \log\log n$ from $|X_{\tau_2'}| \leq n^{1/2}$, we consider a coupling between $(X_t)$ and a biased random walk which we define as follows. Let $Q_s$, $s\in[n]$ be i.i.d. random variables given by
\begin{equation*}
\P(Q_s = 1+\delta_2) = (\log n)^{- \gamma}, ~~~~\P(Q_s = 1-\delta_2) = 1- (\log n)^{- \gamma},
\end{equation*}
where $\gamma:= \delta_2^2 \Delta/3$. Here, $\P(Q_s = 1+\delta_2) $ corresponds to the minimum of the r.h.s. of (\ref{eq:short surv one step}) over $m\in [\Delta \log\log n ,\, \varepsilon_2 n]$. Define $(\widehat{X}_t)_{t\geq \tau_2'} $ by $\widehat{X}_{\tau_2'} = \sqrt{n}$, and
\begin{equation*}
\widehat{X}_{\tau_2' +t} = \widehat{X}_{\tau_2'} \prod_{s=1}^t Q_s.
\end{equation*}
Let $T>0$. Then, conditioned on the events 
\begin{equation}\label{eq:short surv events def}
G_n \in \mathcal{C}_n 
 ~~\textnormal{and} ~~
|X_{\tau_2'+t}| \in [\Delta \log\log n,\,\varepsilon_2 n ] ~\textnormal{ for all } t\in[0, T],
\end{equation}
 we have a natural coupling between $(X_t)$ and $ (\widehat{X}_t)$ such that
 \begin{equation*}
 \left|X_{\tau_2' +t}\right| \leq \widehat{X}_{\tau_2'+t}, ~~\textnormal{for all } t\in[0,T].
 \end{equation*}

Let $t_2$ set to be $$t_2 = \left\lceil \frac{3\log n}{2\log (1+\delta_2)} \right\rceil. $$
(Note that $t_2$ is deterministic whereas $\tau_2'$ is not.) Then, we can control the number of $Q_s$ that equals $1+\delta_2$ as follows.
\begin{equation}\label{eq:number of Qs bd}
\begin{split}
\P \left( \left| \left\{s\in[1,t_2] : Q_s = 1+\delta_2 \right\} \right| \geq \frac{t_2}{4} \right) 
= \P \left(\Bin(t_2, \, (\log n)^{-\gamma} )\geq \frac{t_2}{4} \right) =o(1).
\end{split}
\end{equation}
Since $(1-\delta)(1+\delta) <1$, this means that $(\widehat{X}_{\tau_2'+t})$ reaches $\Delta \log\log n$ before $\varepsilon_2 n$ in time $t\leq t_2$ \textsf{whp}. This implies that if $G_n \in \mathcal{C}_n$, the second event in (\ref{eq:short surv events def}) holds true \textsf{whp} until $|X_t|$ reaches $\Delta \log\log n$ at some $t\leq \tau_2' + t_2$. Thus, we obtain that on $G_n \in \mathcal{C}_n$, $|X_t|$ becomes at most $\Delta \log\log n$ in time $C_2'\log n+t_2  = O(\log n)$ \textsf{whp} for $C_2'$ as in (\ref{eq:C2prime}), if started from $|X_0|\leq \varepsilon_2 n$.

\subsubsection{Elemination of small infections}\label{subsec:short surv step2}
Under the same setting as the previous subsection, define $\tau_2 (0)$ to be the first time when $|X_t|$ is at most $\Delta \log\log n$. We showed that there exists a constant $C_2$ such that
\begin{equation*}
\P(\tau_2(0) \geq C_2 \log n \,|\, |X_0|\leq \varepsilon_2 n , \,  \mathcal{C}_n ) = o(1).
\end{equation*}

For each $i\in\mathbb{N}$, define $\tau_2(i)$ inductively so that
\begin{equation*}
\tau_2(i+1) := \min \{ t \geq \tau_2(i)+1 : |X_t| \leq \Delta \log\log n \}.
\end{equation*}
At time $\tau_2(0)+1$, we obtain $X_{\tau_2(0)+1} = \zero$ with probability at least
\begin{equation}\label{eq:small inf extinction prob}
(1-p)^{\Delta \log\log n} = (\log n)^{-1/3}.
\end{equation}
If we fail to achieve such a lucky event, we claim that 
\begin{equation}\label{eq:one step of small infection elimination}
\P\left( \left.\tau_2(1) - \tau_2(0) \geq \sqrt{\log n} \,\,\right|\,\tau_2(0),\,\mathcal{C}_n \right) \leq \exp\left(-\frac{1}{4}\sqrt{\log n} \right), 
\end{equation}
for some $\alpha>0$. This can be shown using a similar estimate as (\ref{eq:number of Qs bd}). That is, 
\begin{equation}\label{eq:small infection elim 1}
\begin{split}
\P &\left( \left| \left\{s\in\left[1, \sqrt{\log n} \right] : Q_s = 1+\delta_2 \right\} \right| \geq \frac{\sqrt{\log n}}{4} \right) \\
&= \P \left(\Bin(\sqrt{\log n}, \, (\log n)^{-\gamma} )\geq \frac{\sqrt{\log n}}{4} \right)
\leq
\exp\left(-\frac{1}{4} \sqrt{\log n} \right)
,
\end{split}
\end{equation}
where the last inequality follows from (\ref{eq:bin large dev}). Noting that $|X_{\tau_2(0)+1}| \leq 2\Delta\log\log n$, the event inside the l.h.s. of (\ref{eq:small infection elim 1}) implies that $(X_{\tau_2(0)+t})$ reaches $\Delta\log\log n$ before $\varepsilon n$. Of course, this happens the same for $\tau_2(i+1)-\tau_2(i)$ with any $i\in \mathbb{N}$.

Now, let $I = \min\{i\in\mathbb{N}: \tau_2(i+1) - \tau_2(i) \geq \sqrt{\log n} \}$. Then, (\ref{eq:one step of small infection elimination}) tells us that
\begin{equation}\label{eq:small infec number of returns}
\P \left(\left.I \leq \sqrt{\log n} \,\right|\,|X_0|\leq \varepsilon_2 n, \, \mathcal{C}_n \right) \leq \sqrt{\log n} \cdot\exp \left(-\frac{1}{4}\sqrt{\log n} \right) \leq \exp \left(-\frac{1}{5}\sqrt{\log n} \right).
\end{equation}
This implies that $\tau_2(\sqrt{\log n}) \leq \sqrt{\log n}\cdot \sqrt{\log n} = \log n$ \textsf{whp}.
Moreover, since $X_{\tau_2(i)+1} = \zero$ with probability at least $(\log n)^{-1/3}$ by (\ref{eq:small inf extinction prob}),
\begin{equation*}
\P(X_{\tau_2(i)+1} \neq \zero ~\textnormal{ for all }i\leq \sqrt{\log n} ) 
\leq 
\P\left(\Bin\left( \sqrt{\log n},\, (\log n)^{-\frac{1}{3}} \right) =0 \right)
\leq
\exp\left(-(\log n)^{\frac{1}{6}} \right),
\end{equation*} 
which tells us that $X_{\tau_2(\sqrt{\log n})} = \zero$ \textsf{whp}. Along with the discussion in Section \ref{subsec:short surv step1}, we deduce that on $G_n\in \mathcal{C}_n$, the process started from $|X_0|\leq \varepsilon_2 n$ dies out in time $(C_2 +1) \log n$ \textsf{whp}.
Since $\P(G_n \in \mathcal{C}_n) = 1-o(1)$ by (\ref{eq:prop Cn}), the conclusion of Theorem \ref{thm:thm1}-2 follows. \qed

\subsection{Proof of Theorem \ref{thm:thm4}}\label{subsec:thm4}

The goal of this subsection is to establish Theorem \ref{thm:thm4} and Corollary \ref{cor:er}. To this end, we first show that Proposition \ref{prop:3gen} continues to hold when we replace the assumption $D\geq \theta+2$ by the \textit{$(\theta+2)$-core condition}. Recall that the $r$-core of $G$ is the largest induced subgraph of $G$ whose minimal degree is $r$.


Let $\mu$ be a given degree distribution  with $d=\E_{D\sim \mu}D$. Recall Proposition \ref{prop:kcore} where we defined $F_r(h)$ and $\rho_r(h)$,
 and saw  that the existence of $h\in (0,1)$ satisfying $dh^2 <F_r(h)$ implies the existence of the $r$-core of size $\approx \rho_r(\hat{h}) n$ in $G_n\sim \mathcal{G}(n,\mu)$ \textsf{whp}, as mentioned in the proposition. 
 
 Let $K_n$ be the $r$-core of $G_n$, and consider the event $\mathcal{F}^{*\theta}(m_1,m_2)$ for $K_n$. That is,
 \begin{equation*}
 K_n \in \mathcal{F}^{*\theta} (m_1,m_2) ~\textnormal{if and only if}~ \forall \,W\subset K_n \textnormal{ of size }m_1, ~|W^{*\theta}| \geq m_2.
 \end{equation*}
 Here, note that $W^{*\theta}$ is defined in terms of $K_n$, so that it contains vertices in $K_n$ that has at least $\theta$ neighbors in $W$. Under this setting, we have the following generalization of Proposition \ref{prop:3gen}.

\begin{proposition}\label{prop:3gen core}
	Let $\theta\geq 2$ be an integer and $\mu$ be a degree distribution that satisfies $\E_{D\sim \mu} D^6 = M<\infty$ and (\ref{eq:rcore condition}) with $r=\theta+2$. Let $\rho=\rho_{\theta+2}(\hat{h})$ with $\rho_{\theta+2}$ and $\hat{h}$ as in (\ref{eq:rcore eq def}) and the discussion below (\ref{eq:rcore condition}). Then, there exists a constant $\varepsilon_1(\theta, M)>0$ such that for $G_n\sim \mathcal{G}(n,\mu)$, its $(\theta+2)$-core $K_n$ is of size  $1-\frac{1}{100}\varepsilon_1\leq |K_n|/\rho n \leq 1+\frac{1}{100}\varepsilon_1 $ \textsf{whp}, and satisfies
	\begin{equation}\label{eq:prop3 core}
	\P \left(K_n \in \mathcal{F}^{*\theta} \left( \left\lceil(1-\varepsilon_1) \rho n\right\rceil , \lceil(1-0.9\varepsilon_1)\rho n\rceil \right)\right) 
	\geq
	1- o(1),
	\end{equation}
	for all large enough $n$, where $C>0$ is an absolute constant.
\end{proposition}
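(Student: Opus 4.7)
The plan is to reduce Proposition \ref{prop:3gen core} to an application of Proposition \ref{prop:3gen} on the $(\theta+2)$-core $K_n$ itself, which will play the role of the ambient graph. First, Proposition \ref{prop:kcore} yields $|K_n|/n \to \rho$ in probability, and a slightly sharper concentration bound (available in \cite{k06,jl07}) will give $(1-\tfrac{1}{100}\varepsilon_1)\rho n \leq |K_n| \leq (1+\tfrac{1}{100}\varepsilon_1)\rho n$ \textsf{whp}.

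Next, I would invoke the standard fact---provable via the cut-off line algorithm run up to height $\hat h$---that conditional on the vertex set $K_n$ and the intra-core degree sequence $(D_i^{\mathrm{core}})_{i\in K_n}$, the induced subgraph on $K_n$ is distributed as a configuration-model graph on $K_n$. Moreover, the intra-core degrees are well approximated by i.i.d.\ samples from $\mu^{\mathrm{core}}$, namely the law of $\mathrm{Bin}(D,1-\hat h)$ conditioned on being at least $\theta+2$. By construction $\mu^{\mathrm{core}}$ is supported on $\{\theta+2,\theta+3,\ldots\}$, and since binomial thinning and conditioning on a positive-probability event both preserve polynomial moment bounds, $\E_{D'\sim\mu^{\mathrm{core}}}[(D')^6] \leq \rho^{-1}\E_{D\sim\mu}[D^6] \leq \rho^{-1}M < \infty$.

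Finally, I would apply Proposition \ref{prop:3gen} to the configuration-model graph on $|K_n|$ vertices with degree distribution $\mu^{\mathrm{core}}$. This yields a constant $\varepsilon_1^{\mathrm{core}} > 0$ depending only on $\theta$ and $M/\rho$, such that \textsf{whp} every $W \subset K_n$ of size $\lceil(1-\varepsilon_1^{\mathrm{core}})|K_n|\rceil$ satisfies $|W^{*\theta}|\geq \lceil(1-0.9\varepsilon_1^{\mathrm{core}})|K_n|\rceil$. Choosing $\varepsilon_1$ slightly smaller than $\varepsilon_1^{\mathrm{core}}$ to absorb the $|K_n| = (1\pm\tfrac{1}{100}\varepsilon_1)\rho n$ fluctuations, and correspondingly adjusting the $0.9$ factor, translates this into the bound stated in terms of $\rho n$.

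The main obstacle will be the rigorous characterization of the law of $K_n$ in the second step: one must establish the uniform-over-$W$ concentration of the intra-core degree sequence well enough that the proof of Proposition \ref{prop:3gen} applies for \emph{every} subset $W\subset K_n$ simultaneously, rather than just in an averaged sense. An attractive alternative is to redo the proof of Proposition \ref{prop:3gen} from scratch on $K_n$, using a cut-off line algorithm that first pairs the half-edges incident to a fixed $W\subset K_n$ among the pool of half-edges belonging to core vertices; Lemmas \ref{lem:sum Di}--\ref{lem:HT bound} should then carry over with the same error bounds once the intra-core moment and minimum-degree estimates are verified, after which the union bound over subsets $W$ proceeds exactly as in Section \ref{subsec:3gen}.
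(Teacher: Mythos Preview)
Your alternative approach---redoing the proof of Proposition~\ref{prop:3gen} directly on $K_n$ via the cut-off line algorithm---is exactly what the paper does, and it is the cleaner route. The paper runs the algorithm in two phases: first it peels off all non-core vertices by iteratively matching half-edges of \emph{light} vertices (those with fewer than $\theta+2$ clones below the current cut-off) until the active set $R_t$ becomes empty, leaving the core $V_{\textsc{core}}$ with intra-core degrees $D_i'$ counted below the terminal height $H_{\textsc{core}}$. Then, for a fixed $U\subset V_{\textsc{core}}$ of size $\lceil\varepsilon|V_{\textsc{core}}|\rceil$, it restarts the algorithm from height $H_{\textsc{core}}$ to match the half-edges at $U$, and the analysis of Section~\ref{subsec:3gen} goes through verbatim with $d_0$ replaced by $\hat d_0=F_{\theta+2}(\hat h)$.

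The key point that dissolves your ``main obstacle'' is that no distributional characterization of the intra-core degrees is needed at all: the paper simply uses the deterministic inequality $\sum_{i\in U}D_i'\leq\sum_{i\in U}D_i$ and invokes the event $\mathcal{B}_n$ \emph{on the original degree sequence} $\{D_i\}$. Thus Lemma~\ref{lem:sum Di} is applied once on $G_n$, not on $K_n$, and no i.i.d.\ approximation by $\mu^{\mathrm{core}}$ ever enters. Your primary approach of applying Proposition~\ref{prop:3gen} as a black box to $K_n$ would genuinely require the core degrees to be (approximately) i.i.d., which they are not in any sense strong enough to survive the union bound over $\binom{|K_n|}{\varepsilon|K_n|}$ subsets; the two-phase cut-off sidesteps this entirely.
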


\begin{proof}
	Let $\varepsilon>0$ be a small constant that will be chosen later. Let $\{D_i\}_{i=1}^n$ be the degree sequence of $G_n$ generated by i.i.d. samples of $\mu$, and define the event $\mathcal{B}_n$ by 
	\begin{equation*}
	\mathcal{B}_n := 	\left\{\textnormal{for all }W\subset V \textnormal{ with } |W|=\varepsilon n,~  \sum_{i\in W}  D_i \leq 3 M^{\frac{1}{6}} \varepsilon^{ \frac{4}{5}} n \right\}.
	\end{equation*}
	Then, by Lemma \ref{lem:sum Di}, we have
	\begin{equation*}
	\P \left(G_n \in \mathcal{B}_n \right) \geq 1- \frac{C \varepsilon^{\frac{1}{5}}}{(\varepsilon n)^5}.
	\end{equation*}

	Now, given the degree sequence $\{D_i\}_{i=1}^n$ of $G_n$, we give an ordering to all half-edge clones as in Definition \ref{def:cutoffline}. Then we generate $K_n$, the $(\theta+2)$-core, using the cut-off line algorithm as follows:
	\begin{enumerate}
		\item[0.] Set the initial height of the cut-off line $H_0=1$. Let $L_0$ be the set of vertices $i$ with $D_i \leq \theta+1$, and $R_0$ be the set of half-edge clones attached to $L_0$. We call a vertex is \textit{light} if it has less than $\theta+2$ clones under the cut-off line.
		
		\item[1.] At step $t$,  we match the half-edge $a_t$ in $R_{t-1}$ that is chosen with respect to the prescribed order with the highest unmatched clone $a_t'$ (i.e., having the largest $B_j$ in the language of Definition \ref{def:cutoffline}) and set $H_t$ to be its height.
		
		\item[2.] Let $v_t$ be the vertex containing $a_t'$. Update $R_t$ by
		$$R_t = R_{t-1} \setminus \{a_t, a_t' \}.$$
		Moreover, add all the half-edges at $v_t$ below $H_t$ to $R_t$, if the number of them is less than $\theta+2$, i.e., $v_t$ becomes \textit{light} at step $t$. 
	\end{enumerate}
	
	\noindent Let $\mathcal{K}_n$ be the event that $K_n$, the $(\theta+2)$-core of $G_n$, is of size  $$1-\frac{\varepsilon}{100} \leq \frac{|K_n|}{\rho n} \leq 1+\frac{\varepsilon}{100}.$$ If $\mu$ satisfies (\ref{eq:rcore condition}) with $r=\theta+2$, then we have $\mathcal{K}_n$ \textsf{whp}, so that the process terminates in the sense that at least $\rho n$ vertices remain \textit{heavy} when $R_t$ becomes empty.  Let $H_{\textsc{core}}$ be the height of the cut-off line when $R_t$ is empty for the first time. Then the vertices $V_\textsc{core}$ remaining at that time forms $K_n$, and each $i\in V_\textsc{core}$ has $D_i' \geq \theta+2$ clones below $H_\textsc{core}$. 
	
	Let $U\subset V_{\textsc{core}}$ be any fixed subset of size $\lceil \varepsilon|V_\textsc{core}| \rceil$. Lemma \ref{lem:sum Di} implies that if $G_n \in \mathcal{B}_n$, regardless of the choice of $U$
	\begin{equation}\label{eq:sum Di core}
	\sum_{i\in U} D_i' \leq \sum_{i\in U}D_i \leq 3M^{\frac{1}{6}} \varepsilon^{\frac{4}{5}} n,
	\end{equation}
	by choosing $k=6$ and $\eta = \frac{1}{5}$ in (\ref{eq:sum Di}). Suppose that we start running the cut-off line algorithm again from the initial height $H_\textsc{core}$ to match all the remaining half-edges at $U$. Then, thanks to   (\ref{eq:sum Di core}), we can repeat the same argument as proof of Proposition \ref{prop:3gen} and obtain
	\begin{equation*}
	\P\left(\left.\left|V_\textsc{core} \setminus U^{*\theta} \right| \geq 0.9\varepsilon |V_\textsc{core}| \, \right|\, |V_\textsc{core}|,\, \mathcal{B}_n \cap \mathcal{K}_n \right)
	 \leq 
	 2\exp \left(-1.1\varepsilon \log\left(\frac{1}{\varepsilon} \right)|V_\textsc{core}| \right).
	\end{equation*}
	The only difference is to replace $d_0$ in Lemma \ref{lem:HT bound} and (\ref{eq:cutoff height bound for 2}) by $\hat{d}_0$, where
	\begin{equation*}
	\hat{d}_0 = F_{\theta+2} (\hat{h}),
	\end{equation*}
	with $F_{\theta+2}$ and $\hat{h}$ as in (\ref{eq:rcore eq def}) and the discussion below (\ref{eq:rcore condition}). Here, $\hat{d}_0 n/2$ corresponds to the expected total number of edges in $K_n$ (see \cite{jl07}, Theorem 2.3).
	
	Therefore, applying the union bound over all choices of $U$ deduces the conclusion for $\varepsilon_1$ satisfying (\ref{eq:epsilon1 def}), since the event $\mathcal{B}_n\cap \mathcal{K}_n $ happens \textsf{whp} for $G_n\sim \mathcal{G}(n,\mu)$. 
\end{proof}

We conclude the section by proving Theorem \ref{thm:thm4} and Corollary \ref{cor:er}.

\begin{proof}[Proof of Theorem \ref{thm:thm4}]
	Let $\rho = \rho_{\theta+2}(\hat{h})$, where $\rho_{\theta+2}$ and $\hat{h}$ are as in (\ref{eq:rcore eq def}) and the discussion below (\ref{eq:rcore condition}). To establish the first part of the theorem, we only focus on the infections inside $K_n$, the $(\theta+2)$-core of $G_n$, and repeat the argument in Section \ref{subsec:thm1-1} with
	\begin{equation*}
	p : = \frac{1-\frac{99}{100}\varepsilon_1}{1- \frac{95}{100}\varepsilon_1},
	\end{equation*}
	for $\varepsilon_1$ as in Proposition \ref{prop:3gen core}.
	Then, starting from $X_0\equiv\one$, $(1-\varepsilon_1)$ fraction of $K_n$ will remain infected for exponentially long time, and hence we deduce Theorem \ref{thm:thm4}-1.
	
	Suppose that we want to establish the second part of the theorem with the above $p$. Recalling Propositin \ref{prop:2gen} and the discussion in   Section \ref{subsec:thm1-2}, we required $\mu$ to satisfy
	\begin{equation*}
	\E_{D\sim \mu} D^{20/\delta} <\infty,
	\end{equation*}
	for $\delta = \frac{1}{3} (1-p)$. Here, we can see that $\frac{20}{\delta} = 60(1-p)^{-1} \leq 1500 \varepsilon_1^{-1}$. Based on the conditions for $\varepsilon_1$ given in (\ref{eq:epsilon1 def}), we can set $\varepsilon_1$ to be
	$$\varepsilon_1^{-1} = C_1' (\theta M)^{C_2} ,$$for some absolute constants $C_1', C_2>0$. Setting $C_1 = 1500C_1'$, $\mu$ falls into the regime where Proposition \ref{prop:2gen} works for $\delta = \frac{1}{3}(1-p)$, and hence we can repeat the argument in Section \ref{subsec:thm1-2} to deduce Theorem \ref{thm:thm4}-2.
\end{proof}

\begin{proof}[Proof of Corollary \ref{cor:er}]
	We can instead work with the configuration model $G_n \sim \mathcal{G}(n,\textnormal{Pois}(d))$ (\cite{k06}, Theorem 1.1).
	Then the first part of the Corollary can be proven analogously as above. Moreover,  since the Poisson distribution has all polynomial moments,  we can establish the second part for all $p$ as in Section \ref{subsec:thm1-2}.
\end{proof}

\section{Proof of Theorem \ref{thm:thm2}}\label{sec:thm2}

In this section, we establish Theorem \ref{thm:thm2}. Denoting the threhold-$\theta$ contact process by $(X_t)$ and random $(\theta+1)$-regular graph by $G_n\sim \mathcal{G}(n,\theta+1)$, the proof consists of the following observations.

\begin{enumerate}
	\item[O1.] For a cycle $C$ inside $G_n$, if $X_t(C)\equiv \zero$ at some time $t$, then $X_s(C)\equiv \zero$ for all $s\geq t$.
	
	\item[O2.] There exists a constant $\kappa(\theta)$ such that \textsf{whp}, $G_n \sim \mathcal{G}(n, \theta+1)$ can be covered by cycles of length at most $\kappa \log n$.
	
\end{enumerate}
 
\noindent The first observation is based on the definition of $(X_t)$. That is, if the entire cycle $C$ is healthy, then at each $v\in C$ we can only find at most $\theta-1$ infected neighbors of $v$, and hence  $v$ stays healthy in the next time step. The second one is obtained by the following proposition.

\begin{proposition}\label{prop:cycle}
	Let $d\geq 3$ be an integer. There exists a constant $\kappa(d)>0$ such that \textsf{whp} over $G_n\sim\mathcal{G}(n,d)$, we have that	for all $v\in V(G_n)$, there is a cycle $C_v \ni v$ of length at most $\kappa \log n$.
\end{proposition}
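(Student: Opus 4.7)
The plan is, for each vertex $v$, to exhibit a short path in $G_n \setminus \{v\}$ between two neighbors of $v$ and then close it up with two edges at $v$. Fix $v$ and generate $G_n$ via the configuration model in two stages: first pair the $d$ half-edges at $v$, exposing neighbors $u_1,\ldots, u_d$, then complete the uniform matching on the remaining half-edges. After the first stage, the graph on $V \setminus \{v\}$ is a configuration model graph in which $d$ specified vertices have degree $d-1$ and all others have degree $d$. If I can show that some $u_j$ with $j \neq 1$ lies at distance $O(\log n)$ from $u_1$ in $G_n \setminus \{v\}$ with probability $1 - o(n^{-1})$, then the corresponding path together with the edges $u_1 v$ and $v u_j$ forms a simple cycle through $v$ of length $O(\log n)$, and a union bound over $v$ finishes the proof. (The neighbors $u_1,\ldots,u_d$ are distinct uniformly in $v$ with probability $1 - o(1)$ since $G_n$ is simple whp when $d$ is constant.)

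For the distance bound I would run a BFS from $u_1$ inside $G_n \setminus \{v\}$, revealing the matchings in BFS order as in the cut-off line algorithm. The analysis splits into two regimes. In the \emph{sparse phase}, while $|B_r| \leq \sqrt{n}$, the probability that a freshly matched half-edge lands inside $B_r$ is $O(|B_r|/n)$, so the BFS tree is essentially a tree and each level expands by a factor close to $d - 1 \geq 2$; a Chernoff bound gives $|B_{r_0}| \geq n^{1/2 - o(1)}$ for some $r_0 \leq \log_{d-1} n$ with failure probability $e^{-n^{\Omega(1)}}$. In the \emph{dense phase}, while $\sqrt{n} \leq |B_r| \leq n - \sqrt{n}$, at least a constant fraction of the boundary half-edges still match outside the current ball, so an Azuma martingale bound gives $|B_R| \geq n - \sqrt{n}$ for some $R = O(\log n)$, again with failure probability $e^{-n^{\Omega(1)}}$. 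Since $B_R$ misses fewer than $\sqrt{n}$ vertices and $d - 1 \geq 2$ of the $u_j$'s are at stake, at least one $u_j$ with $j \neq 1$ lies in $B_R$, so $\mathrm{dist}_{G_n \setminus v}(u_1, u_j) \leq R = O(\log n)$.

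The main obstacle is pushing the concentration in BFS growth down to $e^{-n^{\Omega(1)}}$ uniformly across the transition between the sparse and dense regimes, so that a union bound over the $n$ choices of $v$ closes with room to spare. The sparse phase is routine Chernoff on the number of expanding matches, but the dense phase requires care, since the success probability of each matching step depends on the current boundary; a martingale on the boundary-size process, coupled with the deterministic lower bound $|B_r| \geq \sqrt{n}$ inherited from the sparse phase, suffices. A lighter-weight alternative, should the bookkeeping become burdensome, is to invoke known strong diameter estimates for the configuration model (see, e.g., \cite{vanderhofstad17}), which give $\mathrm{diam}(G_n \setminus \{v\}) = O(\log n)$ with probability $1 - o(n^{-1})$ for each fixed $v$ and immediately bound $\mathrm{dist}(u_1, u_j)$ as above.
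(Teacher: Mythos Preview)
Your strategy is sound and close in spirit to the paper's, but it takes a heavier route and has one genuine gap in the final step.

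\textbf{Comparison with the paper.} Both arguments fix $v$, expose its neighbors $u_1,\ldots,u_d$, and work in $G_n\setminus\{v\}$. The paper, however, runs a \emph{two-sided} exploration: it grows branches from $u_1$ and from $u_2$ separately until each has roughly $\tfrac12 n^{1/2}\log n$ boundary half-edges (this needs only your sparse-phase Chernoff argument, with the required $1-o(n^{-1})$ expansion bound taken from \cite{bd82}), and then a birthday-paradox calculation shows the two boundaries meet at the next matching step with probability at least $1-\exp(-c(\log n)^2)$. The explored sets never exceed size $O(\sqrt{n}\log n)$, so the dense phase you flag as the ``main obstacle'' is simply absent. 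Your one-sided BFS, by contrast, must be pushed to $|B_R|\geq n-\sqrt{n}$, which is where the delicate martingale work lives.

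\textbf{The gap.} The deduction ``$B_R$ misses fewer than $\sqrt{n}$ vertices and $d-1\geq 2$ of the $u_j$'s are at stake, so at least one $u_j$ with $j\neq 1$ lies in $B_R$'' is not valid: nothing prevents all of $u_2,\ldots,u_d$ from lying among the $\sqrt{n}$ missed vertices, and these vertices are not independent of the BFS from $u_1$. To repair this you would either have to push $|B_R|$ all the way to $n-(d-2)$ (essentially the full diameter bound you mention as a fallback), or---more cheaply---grow a sparse-phase ball of size $>\sqrt{n}$ around $u_2$ as well and observe that it must intersect $B_R$ by cardinality. The second fix is exactly the paper's two-sided idea, and once you adopt it the dense phase becomes unnecessary.
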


Assuming Proposition \ref{prop:cycle}, the two observations above easily imply Theorem \ref{thm:thm2}.

\begin{proof}[Proof of Theorem \ref{thm:thm2}]
	Suppose that $G_n\sim \mathcal{G}(n,\theta+1)$ can be covered by cycles of length at most $\kappa \log n$ for some constant $\kappa>0$. Set $a = 1+\kappa \log(\frac{1}{1-p})$.
	For a cycle $C$ of length at most $\kappa \log n$, we have 
	\begin{equation*}
	\P_{\textsc{tcp}}\left(X_{n^{a}}(C) \neq \zero \right) \leq \P\left(\Bin \left(n^{a},\, (1-p)^{\kappa \log n} \right)=0 \right) \leq e^{-n},
	\end{equation*}
	where $(1-p)^{\kappa \log n}$ corresponds to the probability that the entire cycle $C$ gets recovered at a single time step. 
	
	Let $C_1, \ldots, C_m$ be the cycles of length at most $\kappa \log n$ that cover $G_n$. Clearly $m\leq n$, and the above estimate tells us that
	\begin{equation*}
	\P_{\textsc{tcp}} \left( \bigcap_{l=1}^m \left\{X_{ln^{a}}(C_l) \equiv \zero  \right\} \right) \geq 1- ne^{-n},
	\end{equation*}
	where we devote each time interval $[(l-1)n^{a}+1, \, ln^{a} ]$ to eliminating infections inside $C_l$. Further, the  observation O1 tells us that if $X_{ln^a}(C_l)\equiv \zero$ for each $l$, then $X_{mn^a} \equiv \zero$. Therefore, the infection can survive at most $n^{a+1}$-time \textsf{whp}, given that $G_n$ can be covered with cycles of length at most $\kappa \log n$. Since the latter holds \textsf{whp} over the choice of $G_n\sim\mathcal{G}(n,\theta+1)$, the conclusion of Theorem \ref{thm:thm2} follows.	
\end{proof}

We conclude this section by showing Proposition \ref{prop:cycle}, which can be done similarly as \cite{bd82}.

\begin{proof}[Proof of Proposition \ref{prop:cycle}]
	Let $v$ be a fixed vertex in $G_n\sim \mathcal{G}(n,d)$ and set $L=2\log_{d-1} n$. We will show that the probability that the neighborhood $N(v,L)$ contains a cycle crossing $v$ is at least $1-o(n^{-1})$.  Then, the error $o(n^{-1})$ allows us to take a union bound over all vertices, hence implying Proposition \ref{prop:cycle}.
	
	Let $u_1, \ldots, u_d$ be the neighbors of $v$ (if $v$ has a self-loop, then we are done, so we assume that it has $d$ neighbors). Define the \textit{branch} of $u_i$ with respect to $v$ by
	\begin{center}
		$Br(u_i,v; l) := $\{$u\in V(G_n) : \exists$ a path of length $\leq l$ between $u$ and $u_i$ that does not cross $v$\}.
	\end{center}
	If there exist $i\neq j$ such that $Br(u_i, v; L_0+1) \cap Br(u_j, v; L_0+1) \neq \emptyset$ with $L_0 = \lfloor \log_{d-1} (n^{1/2}\log n)  \rfloor$, then we are done. Indeed, we claim that
	\begin{equation}\label{eq:branch intersection}
	\P \left( Br(u_1, v; L_0+1) \cap Br(u_2, v; L_0+1) \neq \emptyset \right) \geq 1 - o(n^{-1}).
	\end{equation}
	To see this, we first see that the branches have enough expansion with large probability. Suppose that we have explored $N(v,h)$, the vertices at distance $h$. At this point, the vertices at distance exactly $h$ from $v$ have unmatched half-edges which will be matched in the next step when we explore $N(v,h+1)$. Let $\partial N(v,h)$ be the collection of those unmatched half-edges, and let $\partial Br(i, h-1)$ be the ones who are attaced to vertices in $Br(u_i,v;h-1)$. Then,   as shown in equation (1) of \cite{bd82},
	\begin{equation}\label{eq:branch expansion}
	\P \left( |\partial Br(i,L_0)| \geq \frac{1}{2}(d-1)^{L_0 +1} ~~\textnormal{for all }i\in[d]  \right) \geq 1-o(n^{-1}).
	\end{equation}
	Further, given that $|\partial Br(1, L_0)|, |\partial Br(2,L_0)| \geq \frac{1}{2} n^{1/2} \log n$, the probability that there exist two half-edges, each from $Br(1,L_0)$ and $Br(2,L_0)$, are paired in the next exploration step is at least
	\begin{equation}\label{eq:matching L0}
	\P \left(\Bin\left(\frac{n^{1/2} \log n}{2}, \, \frac{\log n}{2dn^{1/2}} \right) \geq 1 \right) \geq 1-\exp\left(-\frac{(\log n)^2}{4d} \right).
	\end{equation}
	Therefore, combining (\ref{eq:branch expansion}) and (\ref{eq:matching L0}) implies (\ref{eq:branch intersection}), and hence we obtain the conclusion.
\end{proof}


\section*{Acknowledgement}

The author is grateful to Rick Durrett for introducing the problem and sharing his perspectives. He also thanks Rick Durrett, David Sivakoff, Souvik Dhara, Ankan Ganguly, Dan Han, Xiangying Huang, Yacoub Kureh and Matthew Wascher for fruitful discussions during the  workshop ``2019 AMS MRC: Stochastic spatial models.'' This material is based upon work supported by the National Science Foundation under Grant Number DMS 1641020 and by a Samsung Scholarship.

\bibliographystyle{plain}
\bibliography{TCPref}
\end{document}